\documentclass[11pt]{article}

\usepackage{cite}
\usepackage{amssymb}
\usepackage{amsmath}
\usepackage{amsfonts}
\usepackage{amsthm}
\usepackage[all]{xy}
\usepackage{enumerate}
\usepackage{colonequals}
\usepackage{enumitem}

\usepackage{color}

\usepackage{url}

\usepackage[mathscr]{euscript}

\newtheorem{theorem}{Theorem}[section]

\newtheorem*{theorem*}{Theorem}
\newtheorem{thm}{Theorem}[section]

\newtheorem{definition}{Definition}[section]

\newtheorem{lemma}{Lemma}[section]
\newtheorem{claim}{Claim}[section]

\numberwithin{equation}{section}

\addtolength{\oddsidemargin}{-.875in}
	\addtolength{\evensidemargin}{-.875in}
	\addtolength{\textwidth}{1.75in}

\allowdisplaybreaks[2]

\newcommand{\rest}{\upharpoonright}

               \def\P{{\mathbb{P}}}  \def\R{{\mathbb{R}}}        
  
\newcommand{\G}{\Gamma}


\renewcommand\a{\alpha}

\newcommand\s{\sigma}

\renewcommand\l{\lambda}

  
  \newcommand\Sch{\operatorname{Schreier}}

\allowdisplaybreaks

\setlength\parindent{0pt}

  \linespread{1.4}

\begin{document}

\title{Flexible stability and nonsoficity}
\author{Lewis Bowen and Peter Burton}

\maketitle

\begin{abstract}
\noindent A sofic group $G$ is said to be flexibly stable if every sofic approximation to $G$ can converted to a sequence of disjoint unions of Schreier graphs by modifying an asymptotically vanishing proportion of edges. We establish that if $\mathrm{PSL}_d(\mathbb{Z})$ is flexibly stable for some $d \geq 5$ then there exists a group which is not sofic. \end{abstract}

\tableofcontents

\section{Introduction}

\subsection{Sofic groups}

Soficity is a finite approximation property for countable discrete groups which has received considerable attention in recent years. A group is called sofic if it admits a sofic approximation, which is a sequence of partial actions on finite sets that asymptotically approximates the action of the group on itself by left-translations. The precise definition appears below. Soficity can be thought of as a common generalization of amenability and residual finiteness. We refer the reader to \cite{pestov-sofic-survey, capraro-lupini} for surveys.\\
\\
It is a famous open problem to determine whether every countable discrete group is sofic. It is also widely open to classify sofic approximations to well-known groups, for example by showing that every sofic approximation is asymptotically equivalent to an approximation by actions on finite sets (as opposed to partial actions). If a group has this latter property, it is called flexibly stable. The main result of this paper is that if $\mathrm{PSL}_d(\mathbb{Z})$ is flexibly stable for some $d \geq 5$ then there is a nonsofic group. The proof gives an explicit group $G$, constructed as a quotient of an HNN-extension of $\mathrm{PSL}_d(\mathbb{Z})$, that is not sofic if $\mathrm{PSL}_d(\mathbb{Z})$ is flexibly stable.\\
\\
We now formulate precise definitions to state the result.
\begin{definition} Let $G$ be a countable discrete group. A \textbf{sofic approximation} to $G$ consists of a sequence $(V_n)_{n=1}^\infty$ of finite sets and a sequence $(\sigma_n)_{n=1}^\infty$ of functions $\sigma_n:G \to \mathrm{Sym}(V_n)$ such that the following conditions hold, where we write $\sigma_n^g$ instead of $\sigma_n(g)$. \begin{itemize} \item \underline{Asymptotic homomorphisms}: For every fixed pair $g,h \in G$ we have \[ \lim_{n \to \infty} \frac{1}{|V_n|} |\{ v \in V_n: \sigma_n^g(\sigma_n^h(v))= \sigma_n^{gh}(v) \} | = 1. \] \item \underline{Asymptotic freeness}: For every fixed nontrivial element $g \in G$ we have \[ \label{eq.asyinj} \lim_{n \to \infty} \frac{1}{|V_n|} |\{v \in V_n: \sigma_n^g(v) = v \}| = 0. \] \end{itemize} We say that $G$ is \textbf{sofic} if there exists a sofic approximation to $G$. \end{definition}

\subsection{Flexible stability}

\begin{definition} A sofic approximation $(\sigma_n:G \to \mathrm{Sym}(V_n))_{n=1}^\infty$ is \textbf{perfect} if each $\sigma_n$ is a genuine group homomorphism. \end{definition}

If $S$ is a finite generating set for $G$ we can endow $V_n$ with the structure of a $S$-labelled directed graph by putting an $s$-labelled edge from $v$ to $\sigma^s(v)$ for each $s \in S$ and $v \in V_n$. Accordingly, we refer to the $V_n$ as the vertex sets of the sofic approximation. With this structure, each connected component of a perfect sofic approximation to $G$ is a Schreier graph on the cosets of a finite-index subgroup of $G$.

\begin{definition}\label{def.conjugacy} Let $\Sigma = (\sigma_n: G \to \mathrm{Sym}(V_n))_{n=1}^\infty$ and $\Xi = (\xi_n: G \to \mathrm{Sym}(V_n))_{n=1}^\infty$ be two sofic approximations to $G$ with the same vertex sets. We say that $\Sigma$ and $\Xi$ are at \textbf{edit-distance zero} if  for each fixed $g \in G$ we have \[ \lim_{n \to \infty} \frac{1}{|V_n|} |\{v \in V_n: \sigma^g(v) = \xi^g(v) \}| = 1.
\]
Now suppose the vertex sets of $\Xi = (\xi_n: G \to \mathrm{Sym}(W_n))_{n=1}^\infty$ are not necessarily the same as the vertex sets of $\Sigma$. We say that $\Sigma$ and $\Xi$ are \textbf{conjugate} if there exist finite sets $U_n$ and injections $\pi_n:V_n \to U_n$, $\rho_n:W_n \to U_n$ such that
$$1 = \lim_{n\to\infty} \frac{|V_n|}{|U_n|}  = \lim_{n\to\infty} \frac{|W_n|}{|U_n|}$$
and such that the sofic approximations $( \pi_{n*}\sigma_n)_{n=1}^\infty$ and $(\rho_{n*}\xi_n)_{n=1}^\infty$ are at edit-distance zero. Here $\pi_{n*}\sigma_n:G \to \mathrm{Sym}(U_n)$ is the map defined by:
$$(\pi_{n*}\sigma_n)^g( \pi_n(v)) = \pi_n( \sigma_n^g(v))$$
for $v \in V_n$ and 
$$(\pi_{n*}\sigma_n)^g(u) = u$$
if $u \in U_n \setminus \pi_n(V_n)$. The map $\rho_{n*}\xi_n:G \to \mathrm{Sym}(U_n)$ is defined similarly.
 \end{definition}

\begin{definition} We say that a sofic group $G$ is \textbf{flexibly stable} if every sofic approximation to $G$ is conjugate to a perfect sofic approximation to $G$. \end{definition}

It is clear that a flexibly stable group is residually finite. It is also clear that free groups are flexibly stable. In \cite{2019arXiv190107182L} it is shown that surface groups are flexibly stable. A group $G$ is said to be \textbf{strictly stable} if every sofic approximation is conjugate to a perfect sofic approximation where the conjugacies $\pi$ and $\rho$ as in Definition \ref{def.conjugacy} are bijections. In \cite{MR3350728} it is shown that finitely generated abelian groups are strictly stable. In \cite{2018arXiv180108381B} it is shown that polycyclic groups are strictly stable. In \cite{2018arXiv180900632B} it is shown that no infinite property (T) group is strictly stable. The most elementary example for which flexible stability is unknown seems to be the direct product of the rank two free group with $\mathbb{Z}$. \\
\\
The main result of this paper is the following.

\begin{theorem} \label{thm} Suppose that $\mathrm{PSL}_d(\mathbb{Z})$ is flexibly stable for some $d \geq 5$. Then there exists a group which is not sofic. \end{theorem}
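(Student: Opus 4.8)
The plan is to construct the group $G$ explicitly from $\Gamma := \mathrm{PSL}_d(\mathbb{Z})$ and then argue by contradiction from a hypothetical sofic approximation of $G$. The only feature of $\Gamma$ that makes the flexible stability hypothesis usable is that $\Gamma$ is residually finite, hence sofic, so flexible stability of $\Gamma$ literally asserts that \emph{every} sofic approximation of $\Gamma$ becomes, after enlarging the vertex sets by a vanishing proportion and editing a vanishing proportion of edges, a disjoint union of Schreier graphs on finite-index subgroups of $\Gamma$. I would pair this with the rigidity of the finite-index subgroup structure of $\Gamma$: since $d \ge 3$ the congruence subgroup property applies, so those finite-index subgroups are congruence subgroups, which are essentially canonical, and property~(T) makes the family of all such Schreier graphs a family of uniform expanders.

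The group $G$ will be a quotient $\mathrm{HNN}(\Gamma,\, A \xrightarrow{\phi} B)\big/\langle\langle R\rangle\rangle$ of a single HNN extension of $\Gamma$, where $A, B \le \Gamma$ are carefully chosen subgroups, $\phi\colon A \to B$ an isomorphism, and $R$ a finite family of additional relators involving the stable letter $t$ and elements of $\Gamma$. The hypothesis $d \ge 5$ should enter exactly in guaranteeing that $\mathrm{PSL}_d(\mathbb{Z})$ contains the subgroups $A$ and $B$ and admits the isomorphism $\phi$ needed for this construction, and that the embedding of $\Gamma$ into $G$ stays faithful. The relators $R$ will be designed in the spirit of Higman's non-residually-finite groups and of Baumslag--Solitar relations so as to force the following: in \emph{every} homomorphism from $G$ to a finite symmetric group, the image of $\Gamma$ is a proper quotient of $\Gamma$. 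Since $\Gamma$ is perfect, a proper quotient appearing here will in fact be forced to be trivial, so $\Gamma$ must die in every finite permutation action of $G$.

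Now suppose toward a contradiction that $G$ is sofic and fix a sofic approximation $(\tau_n\colon G \to \mathrm{Sym}(V_n))_{n}$. Restricting each $\tau_n$ to $\Gamma \le G$ gives a sofic approximation of $\Gamma$, so by flexible stability there are finite sets $U_n$ with $|V_n|/|U_n| \to 1$, injections $V_n \hookrightarrow U_n$, and a \emph{perfect} approximation $\sigma_n\colon\Gamma \to \mathrm{Sym}(U_n)$ --- a disjoint union of Schreier graphs on finite-index, hence congruence, subgroups $\Gamma_{n,i} \le \Gamma$ --- agreeing with the transported $\tau_n|_\Gamma$ off a vanishing proportion of $U_n$. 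The crucial step is to promote the pair consisting of $\sigma_n$ and a corrected version $t_n$ of $\tau_n(t)$ to a genuine finite permutation action of all of $G$ on $U_n$: since $\tau_n$ only approximately satisfies $tat^{-1} = \phi(a)$ and the relators $R$, one must edit $\tau_n(t)$ to an honest permutation $t_n$ of $U_n$ conjugating $\sigma_n$ exactly as $G$ prescribes. Here the rigidity is spent: because in $G$ conjugation by $t$ agrees with $\phi$ on $A$, the multiset $\{\Gamma_{n,i}\}$ is already approximately $\phi$-compatible, and because the $\Gamma_{n,i}$ are congruence subgroups whose Schreier graphs are expanders that cannot be matched in spurious ways, this approximate compatibility can be upgraded to an exact one; the relators $R$ will be chosen to be similarly self-correcting once the $\Gamma$-part is honest. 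The outcome is a homomorphism $G \to \mathrm{Sym}(U_n)$ in which $\Gamma$ has nontrivial image, contradicting the previous paragraph; so asymptotic freeness fails for the original approximation on the nontrivial elements of $\Gamma \le G$, and $G$ is not sofic.

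The main obstacle is this promotion step. Turning a flexibly-stabilized action of $\Gamma$ together with an approximate stable letter into an \emph{exact} finite quotient of $G$ requires controlling how the HNN isomorphism $\phi$ and the relators $R$ interact with the congruence point-stabilizers that flexible stability is forced to produce, and checking that the amount of editing involved is affordable within the error budget. Designing $R$ so that it simultaneously (a) annihilates $\Gamma$ in every finite permutation action of $G$ and (b) remains correctable from approximate to exact once $\Gamma$ acts honestly is the heart of the construction, and it is this tension that dictates the choice of $A$, $B$, $\phi$ and the requirement $d \ge 5$.
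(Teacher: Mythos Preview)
Your outline shares the opening move with the paper --- restrict a hypothetical sofic approximation of $G$ to $\Gamma=\mathrm{PSL}_d(\mathbb{Z})$ and invoke flexible stability to make the $\Gamma$-action honest --- but then diverges at exactly the point you flag as the ``main obstacle,'' and this is a genuine gap rather than a detail. You propose to \emph{promote} the corrected $\Gamma$-action together with an approximate stable letter to a genuine homomorphism $G\to\mathrm{Sym}(U_n)$, and then derive a contradiction from $\Gamma$ having to die in every finite $G$-action. But promoting the $t$-part is essentially asking for stability of the HNN extension itself, which you have no handle on; flexible stability of $\Gamma$ gives you nothing about correcting $\tau_n(t)$. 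The paper never attempts this promotion. It works directly with the approximate relation $t a t^{-1}=\omega(a)$ and extracts a contradiction by a counting argument on the honest $\Gamma$-Schreier graphs, with no need for an exact $G$-action anywhere.

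Concretely, the paper's subgroups are chosen so that $A$ is profinitely dense in $\Gamma$ (hence each $A$-orbit in a finite $\Gamma$-quotient is an entire connected component, and $A$ has property~$(\tau)$ with respect to these quotients via super-approximation), while $B$ sits inside the upper-left $\mathrm{PSL}_2(\mathbb{Z})$, and $\langle A,B\rangle$ is free so that an order-$2$ automorphism $\omega$ swapping $A$ and $B$ exists. The role of $d\ge 5$ is not to make $A,B,\phi$ exist (that works for $d\ge 3$) but to force a uniform density gap: every $\mathrm{PSL}_2$-orbit in any proper finite quotient of $\Gamma$ has size at most $\tfrac{1}{16}$ of the quotient. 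Now in the stabilized picture the $A$-components are $c$-expanders, and $\sigma^t$ must (approximately) carry $A$-edges to $B$-edges; but $\sigma^t$ also carries each $A$-component into $B$-orbits that are at least $16$ times smaller than the $A$-component they land in. A pigeonhole on component sizes, combined with the expander inequality applied to the preimages under $\sigma^t$ of individual $B$-orbits, produces a definite fraction of $A$-edges whose $\sigma^t$-images are \emph{not} $B$-edges, contradicting the sofic relation $tat^{-1}=\omega(a)$. No extra relators $R$ are introduced, and no self-correcting mechanism for $t$ is needed; the contradiction lives entirely at the level of the approximate action.
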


The nonsofic group of the theorem has the following form. Let $H$ be a countable discrete group with subgroups $A$ and $B$ and suppose there is an isomorphism $\phi:A \to B$. The {\bf HNN extension} $H \ast_\phi$ is defined to be $(H \ast \langle t \rangle)/ N$ where $H \ast \langle t\rangle$ is the free product of $H$ with a copy of $\mathbb{Z}$ and $N$ is the smallest normal subgroup of $H \ast \langle t \rangle$ containing all elements of the form $t a t^{-1} \phi(a)^{-1}$ for $a \in A$. We will need a $\mathrm{mod} \,\,2$ version of the construction above. So let $N_2$ be the smallest normal subgroup of $H \ast \langle t \rangle$ containing all elements of the form $tat^{-1}\phi(a)^{-1}$ for $a \in A$ along with $t^2$. Let $H \ast_\phi/2 = (H \ast \langle t \rangle)/N_2$. \\
\\
In Section \ref{sec:general}, we show that if $H$ is flexibly stable and if $H,A,B$ and $\phi$ satisfy certain technical conditions then the group $H \ast_\phi/2$ cannot be sofic. This part of the argument is completely general in that it does not use anything specific to $\mathrm{PSL}_d(\mathbb{Z})$. The rest of the paper involves constructing two subgroups $A$ and $B$ of $\mathrm{PSL}_d(\mathbb{Z})$ and showing that they possess the required properties. This part uses a ping-pong type argument that originates in the reference \cite{MR3228932}. Other precursors to this idea can be found in work on strong approximation in \cite{MR1090825}, on maximal subgroups of $\mathrm{PSL}_d(\mathbb{Z})$ in \cite{MR3569567} and on the congruence subgroup property in \cite{MR0171856}. We need that $d\ge 5$ only because this condition guarantees that all $\mathrm{PSL}_2(\mathbb{Z})$ orbits in $\mathrm{PSL}_d(\mathbb{Z}/p\mathbb{Z})$ have density bounded by a constant which is strictly less than 1. We do not know whether the result can be improved to $d \in \{3,4\}$. \\
\\
Because Theorem \ref{thm} uses such heavy machinery, it is natural to wonder whether results of its type can be found among other groups. For example, if $H$ is a direct product of two free groups then do there exist subgroups $A$ and $B$ satisfying the criteria of Theorem \ref{thm:general}? What if $H$ is a lattice in the isometry group of quaternionic hyperbolic space? Another interesting case would be to establish Theorem \ref{thm:general} for a $2$-Kazhdan group such as a higher-rank $p$-adic lattice. The relevance of this last case is that in \cite{2017arXiv171110238D} it is shown that $2$-Kazhdan groups satisfy the analog of flexible stability for homomorphisms into finite-dimensional unitary groups with the unnormalized Frobenius metric. It is unknown whether $\mathrm{PSL}_d(\mathbb{Z})$ is $2$-Kazhdan.

\subsection{Acknowledgments}

The first author would like to thank Tsachik Gelander for conversations on this problem many years ago, including a proof that certain groups constructed in a manner similar to the group appearing in the main theorem are not residually finite and therefore are good candidates for being nonsofic. We thank Emmanuel Breuillard for helpful discussions related to the proof of Lemma \ref{lem.small}. We also thank Yves Stalder for catching some errors in a previous version.

\section{General results}\label{sec:general}

\begin{thm}\label{thm:general}
Suppose $H$ is a flexibly stable countable discrete group with subgroups $A$ and $B$ satisfying the following conditions.
\begin{enumerate}[label={(\arabic*)}]
\item If $K \le H$ has finite index, then every $B$-orbit in $H/K$ is contained in an $A$-orbit. Explicitly, this means for every $h \in H$ we have $BhK \subseteq AhK$.
\item If $C$ is the subgroup generated by $A$ and $B$ then there is an automorphism $\omega \in \mathrm{Aut}(C)$ such that $\omega(A)=B$ and $\omega^2$ is the identity.
\item There is a constant $\l>1$ such that if $K$ is a proper finite index subgroup of $H$ then for every $g,h \in H$ we have
\begin{equation} \label{eq.becky-6} |AgK| \ge \l |BhK| \end{equation}
where the cardinality $|\cdot|$ is taken in $H/K$. 
\item $A$ has property $(\tau)$ with respect to the family of finite index subgroups \[ \{K \cap A:~ K \le H, [H:K]<\infty\}. \] \end{enumerate}
Then the group
\[ G =\langle H, t | t^2=1, tat^{-1}=\omega(a)~ \forall a\in A\rangle \] is not sofic.
\end{thm}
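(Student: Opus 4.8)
The plan is to argue by contradiction: assume $G = \langle H, t \mid t^2=1,\ tat^{-1}=\omega(a)\ \forall a\in A\rangle$ is sofic, fix a sofic approximation $\Sigma = (\sigma_n: G \to \mathrm{Sym}(V_n))$, and restrict it to $H$. Since $H$ is flexibly stable, this restriction $\Sigma\rest H$ is conjugate to a perfect sofic approximation; after passing through the conjugating injections (and relabelling vertex sets, enlarging them by a vanishing proportion), we may assume that on a $(1-o(1))$-fraction of vertices the maps $\sigma_n\rest H$ literally define an action of $H$, so each $V_n$ decomposes (up to $o(|V_n|)$ vertices) into $H$-orbits, each isomorphic to $H/K$ for some finite-index $K \le H$. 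The element $t$ still acts as an almost-automorphism implementing $\omega$: for each fixed $a \in A$, $\sigma_n^t \sigma_n^a (\sigma_n^t)^{-1}$ agrees with $\sigma_n^{\omega(a)}$ on a $(1-o(1))$-fraction of $V_n$, and $\sigma_n^t$ is an almost-involution.

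The core of the argument is a counting obstruction driven by hypotheses (1)–(4). Consider a single $H$-orbit $O \cong H/K$ appearing in $V_n$. Because $\sigma_n^t$ conjugates the $A$-action approximately to the $B$-action, it must send (most of) an $A$-orbit in $O$ to something that looks like a $B$-orbit in the image orbit. Using (2), the subgroup $C = \langle A, B\rangle$ acts, and $\omega$ swaps $A$ and $B$; so $t$ approximately intertwines the $C$-action with itself twisted by $\omega$, and since $\omega^2 = \mathrm{id}$ this is consistent with $t^2=1$. Now partition a typical $H$-orbit $H/K$ into $C$-orbits, and within each $C$-orbit into $A$-orbits and into $B$-orbits. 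Hypothesis (1) says every $B$-orbit sits inside an $A$-orbit, so the $A$-orbit partition is coarser than the $B$-orbit partition; hypothesis (3) says that whenever $K$ is a \emph{proper} subgroup, $A$-orbits are uniformly (factor $\lambda > 1$) larger than $B$-orbits. The map $\sigma_n^t$, being an almost-bijection of $V_n$ with itself that approximately carries the $A$-orbit structure to the $B$-orbit structure, then forces the total mass of vertices lying in "large" ($A$-side) pieces to approximately equal the total mass of vertices lying in "small" ($B$-side) pieces — but the factor-$\lambda$ discrepancy from (3) makes this impossible unless almost all vertices lie in orbits $H/K$ with $K = H$ itself, i.e.\ fixed points of the $H$-action. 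That contradicts asymptotic freeness of $\Sigma$ for nontrivial elements of $H$.

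The role of hypothesis (4), property $(\tau)$ for $A$ relative to $\{K \cap A\}$, is to make the heuristic "$\sigma_n^t$ approximately carries the $A$-orbit partition onto the $B$-orbit partition" rigorous: a priori $\sigma_n^t$ only approximately intertwines the two actions on average over $V_n$, and without an expansion/spectral-gap input one cannot rule out that the errors concentrate so as to let an $A$-orbit be split across many target $B$-orbits in a way that balances the counts. Property $(\tau)$ gives that the Schreier graphs of $A$ on $K\backslash A$ are uniform expanders, so a partial map that is an approximate $A$-equivariant morphism on a large fraction of an $A$-orbit must in fact respect the orbit almost entirely — this lets us transfer the edit-distance-zero estimates from the whole of $V_n$ down to individual orbits (or to bounded clusters of them) and run the counting argument orbit-by-orbit. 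I expect this step — upgrading the global edit-distance bound to a local, per-orbit statement using $(\tau)$, and then bookkeeping the orbit sizes through the almost-involution $\sigma_n^t$ — to be the main technical obstacle; the final contradiction with asymptotic freeness is then a short counting conclusion from (1) and (3).
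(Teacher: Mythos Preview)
Your strategic outline is right and matches the paper's, but there is one genuine gap. You implicitly compare the $A$-orbit of $v$ with the $B$-orbit of $\sigma^t(v)$ via condition~(3), yet (3) only compares $A$- and $B$-orbits lying in the \emph{same} coset space $H/K$, and $t$ does not normalise $H$ in $G$ (indeed $G\cong H\ast_C (C\rtimes\Z/2\Z)$, so $tht^{-1}\notin H$ for $h\in H\setminus C$). Thus $\sigma^t$ may send a vertex in a small $H$-orbit to a vertex in an enormous $H$-orbit, and the target $B$-orbit can then be much larger than the source $A$-orbit; your size-discrepancy heuristic collapses here. The paper fixes this with an ordering trick: list the $A$-components $\Omega_1,\ldots,\Omega_m$ by non-increasing size and restrict to $D=\{w:\ w\in\Omega_j,\ \sigma^t(w)\in\Omega_k,\ j\le k\}$. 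Since $\sigma^t$ is almost an involution, $|D|\ge\tfrac{9}{20}|V|$; and for $w\in D$ the $B$-component through $\sigma^t(w)$ lies in some $\Omega_k$ with $|\Omega_k|\le|\Omega_j|$, so (3) now yields that each $\sigma^t$-preimage of a $B$-component, intersected with $D\cap\Omega_j$, has size at most $\tfrac{1}{\lambda}|\Omega_j|$.

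Two further corrections. First, the use of $(\tau)$ is not an ``upgrade approximate equivariance to exact on each orbit'' lemma; it is the raw isoperimetric inequality. Having partitioned $D\cap\Omega_j$ into pieces $\Theta_q$ each of size at most $\tfrac{1}{\lambda}|\Omega_j|$ inside the $c$-expander $\Omega_j$, one gets $|\partial_\Gamma\Theta_q|\ge c\min(1,\lambda-1)|\Theta_q|$, and summing over $q$ and then over those $j$ with $|D\cap\Omega_j|\ge\tfrac{1}{10}|\Omega_j|$ produces a fixed positive fraction of $|V|$ many $\Gamma$-edges $(v,w)$ for which $(\sigma^t(v),\sigma^t(w))$ is not a $\Lambda$-edge. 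Second, this is precisely where the contradiction lands: such an edge witnesses $\sigma^t\sigma^s(v)\ne\sigma^{\omega(s)}\sigma^t(v)$ for some generator $s\in S$, and soficity forces that defect to be $o(|V|)$. The contradiction is with the relations $tat^{-1}=\omega(a)$, not with asymptotic freeness via ``$K=H$''; your ``total mass on the $A$-side equals total mass on the $B$-side'' formulation does not by itself yield anything, since both are partitions of the same set $V$. (The partition into $C$-orbits plays no role and can be dropped.)
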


The proof of Theorem \ref{thm:general} is in Subsection \ref{sec:general-proof} below after some preliminaries.

\subsection{Property ($\tau$)}

This section reviews Property $(\tau)$.

\begin{definition} Let $\Gamma = (V,E)$ be a finite graph. If $W \subseteq V$ the \textbf{edge boundary} in $\Gamma$ of $W$ will be denoted $\partial_\Gamma W$ and consists of all edges $(v,w) \in E$ where $v \in W$ and $w \notin W$. If $W$ is nonempty the \textbf{edge isoperimetric ratio} of $W$ will be denoted $\iota_\Gamma(W)$ and is defined to be $|\partial_\Gamma W|\, |W|^{-1}$. The \textbf{edge expansion constant} of $\Gamma$ will be denoted $e(\Gamma)$ and is defined to be the minimum value of $\iota_\Gamma(E)$ over all nonempty subsets $W \subseteq V$ satisfying $|W| \leq \frac{1}{2}|V|$. \end{definition}

\begin{definition} Let $(\Gamma_n)_{n=1}^\infty$ be a sequence of finite connected graphs and let $c>0$. We say that $(\Gamma_n)_{n=1}^\infty$ forms a family of $c$\textbf{-expanders} if $\inf_{n\in \mathbb{N}} e(\Gamma_n) \geq c$. We say that $(\Gamma_n)_{n=1}^\infty$ forms a family of \textbf{expanders} if it forms a family of $c$-expanders for some $c>0$. \end{definition}

\begin{definition} Let $G$ be a group, $H \le G$ and $S \subseteq G$. The {\bf Schreier coset graph} $\Sch(G/H,S)$ is the multi-graph with vertex set $G/H$ and edges $\{gH, sgH\}$ for all $gH \in G/H$ and $s\in S$. Multiple edges and self-loops are allowed.
\end{definition}

\begin{definition} A group $G$ has {\bf Property ($\tau$) with respect to a family $\mathcal{F}$} of finite index subgroups of $G$ if there is a finite generating set $S \subseteq G$ and a constant $c>0$ such that for every $H \in \mathcal{F}$ we have that $\Sch(G/H,S)$ is a $c$-expander. \end{definition}

It is easy to see that Property ($\tau$) for a family $\mathcal{F}$ is does not depend on the choice of $S$.

\subsection{Modular HNN extensions} \label{subsec.HNN}

Let $H$ be a countable discrete group with subgroups $A,B \le H$ and suppose there is an isomorphism $\phi:A \to B$. The {\bf HNN extension} $H \ast_\phi$ is defined to be $(H \ast \langle t \rangle)/ N$ where $H \ast \langle t\rangle$ is the free product of $H$ with a copy of $\mathbb{Z}$ and $N$ is the smallest normal subgroup of $H \ast \langle t \rangle$ containing all elements of the form $t a t^{-1} \phi(a)^{-1}$ for $a \in A$. We will need a $\mathrm{mod} \,\,2$ version of the construction above. So let $N_2$ be the smallest normal subgroup of $H \ast \langle t \rangle$ containing all elements of the form $tat^{-1}\phi(a)^{-1}$ for $a \in A$ along with $t^2$. Let $H \ast_\phi/2 = (H \ast \langle t \rangle)/N_2$. 

\begin{lemma}\label{lem:HNN}
Let $C$ be the subgroup of $H$ generated by $A$ and $B$. Assume there exists an automorphism $\omega$ of $C$ such that $\omega^2$ is the identity and such that $\omega(a) = \phi(a)$ for all $a \in A$ and $\omega(b) = \phi^{-1}(b)$ for all $b \in B$. Then the canonical homomorphism from $H$ to $H \ast_\phi /2$ is injective. \label{lem.inj}
\end{lemma}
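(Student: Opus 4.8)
The plan is to recognize $H \ast_\phi/2$ as an amalgamated free product having $H$ as a vertex group, and then invoke the classical embedding theorem for amalgams. First unwind the definition: since $N_2$ is the normal closure in $H \ast \langle t\rangle$ of the elements $tat^{-1}\phi(a)^{-1}$ ($a \in A$) and $t^2$, the group $H \ast_\phi/2$ is, by construction, the group with presentation
\[ G' \;=\; \langle\, H,\ t \ \mid\ t^2 = 1,\ \ tat^{-1} = \phi(a)\ \ \forall a\in A\,\rangle, \]
obtained from a presentation of $H$ by adjoining the generator $t$ and the displayed relations; the canonical homomorphism of the lemma is the evident map $H \to G'$.

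Since $\omega \in \mathrm{Aut}(C)$ satisfies $\omega^2 = \mathrm{id}$, the group $\langle t \mid t^2\rangle \cong \mathbb{Z}/2\mathbb{Z}$ acts on $C$ via $t \mapsto \omega$, so we may form the semidirect product $L := C \rtimes_\omega \langle t\rangle$, which contains a copy of $C$. As $C \le H$ as well, we may form the amalgamated free product $P := H \ast_C L$, identifying the copy of $C$ inside $L$ with $C \le H$. I claim $P$ is canonically isomorphic to $G'$. A presentation of $P$ is obtained from a presentation of $H$ by adjoining $t$ together with the relations $t^2 = 1$ and $tct^{-1} = \omega(c)$ for $c$ ranging over a generating set of $C$ (the defining relations of $C$ being redundant inside $H$, and the relation $tct^{-1} = \omega(c)$ propagating from generators to all of $C$ because both sides are multiplicative in $c$). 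Taking $A \cup B$ as the generating set, these relations read $tat^{-1} = \phi(a)$ for $a \in A$ and $tbt^{-1} = \phi^{-1}(b)$ for $b \in B$. The second family follows from the first together with $t^2 = 1$: writing $b = \phi(a)$ with $a \in A$, the relation $tat^{-1} = \phi(a)$ gives
\[ tbt^{-1} = t\phi(a)t^{-1} = t(tat^{-1})t^{-1} = t^2 a t^{-2} = a = \phi^{-1}(b). \]
Hence $P$ and $G'$ admit the same presentation, so $P \cong G'$ via an isomorphism carrying the canonical copy of $H$ in $G'$ onto the vertex group $H$ of $P$.

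Finally, by the structure theorem for amalgamated free products (the normal form theorem of Bass--Serre theory; see Serre, \emph{Trees}, or Lyndon--Schupp), each vertex group of $H \ast_C L$ embeds into the amalgam; in particular the canonical map $H \to P$ is injective. Composing with the isomorphism $P \cong G'$ shows that the canonical homomorphism $H \to H \ast_\phi/2$ is injective, as claimed. The only substantive point is the presentation identification $P \cong G'$ of the previous paragraph — in particular checking that the $B$-conjugation relations are redundant — and that is precisely where the hypotheses $\omega^2 = \mathrm{id}$, $\omega|_A = \phi$, $\omega|_B = \phi^{-1}$ are used; once this is in hand the conclusion is immediate from classical theory.
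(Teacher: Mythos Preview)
Your proof is correct and follows essentially the same approach as the paper: form the semidirect product $L = C \rtimes_\omega (\mathbb{Z}/2\mathbb{Z})$, identify $H \ast_\phi/2$ with the amalgam $H \ast_C L$, and invoke the fact that vertex groups embed into an amalgamated free product. Your version is in fact more explicit than the paper's at the one nontrivial point, namely the verification that the relations $tbt^{-1} = \phi^{-1}(b)$ for $b \in B$ are consequences of $t^2 = 1$ and the relations $tat^{-1} = \phi(a)$ for $a \in A$; the paper asserts this via the universal property without writing out the computation.
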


\begin{proof}[Proof of Lemma \ref{lem.inj}] Let $D$ be the semidirect product $C \rtimes \mathbb{Z}/2\mathbb{Z}$ where $\mathbb{Z}/2\mathbb{Z}$ acts on $C$ via the automorphism $\omega$. We claim that $H \ast_\phi/2$ can be constructed as the free product of $H$ with $D$ amalgamated over the common subgroup $C$. Indeed, $H \ast_C D$ is naturally generated by $H$ and the additional generator $t = t^{-1}$ of $\mathbb{Z}/2\mathbb{Z}$. If $a \in A$ then $tat$ is equal to $\omega(a) = \phi(a)$ and similarly if $b \in B$ then $tbt$ is equal to $\omega(b) = \phi^{-1}(b)$. Therefore $tat\phi(a)^{-1}$ and $tbt\phi^{-1}(b)^{-1}$ are trivial in $H \ast_C D$ for all $a \in A$ and all $b \in B$. By the universal property of free products with amalgamation we see that these relations suffice to describe $H \ast_C D$ and so we have established the claim. Since the factor groups always inject into an amalgamated free product this completes the proof of Lemma \ref{lem.inj}. \end{proof}

\subsection{Proof of Theorem \ref{thm:general}}\label{sec:general-proof}

We now prove Theorem \ref{thm:general}. By Lemma \ref{lem:HNN}, the canonical homomorphism from $H$ into $G$ is injective. Thus we identify $H$ as a subgroup of $G$ from now on. Assume toward a contradiction that there exists a sofic approximation $\Sigma = (\sigma_n:G \to \mathrm{Sym}(V_n))_{n=1}^\infty$ to $G$.  Since $H$ is flexibly stable, we may assume without loss of generality that the restriction of $\Sigma$ to $H$ is perfect. \\
\\
Since $A$ has property ($\tau$) with respect to the family \[ \{K \cap A:~ K \le H, [H:K]<\infty\} \] there exists a finite generating set $S \subseteq A$ and a constant $c>0$ such that for every finite index subgroup $K$ of $H$ all connected components of the Schreier coset graph $\Sch(H/K,S)$ are $c$-expanders. Let $\Gamma_n$ be the graph on $V_n$ corresponding to $\{\sigma_n^s:~s\in S\}$. Explicitly, this means that the edges of $\G_n$ are the pairs $\{v,\s_n^s(v)\}$ for $v \in V_n$ and $s\in S$. Since $\Sigma \rest H$ is perfect, every connected component of $\Gamma_n$ is a Schrieier coset graph of the form $\Sch(H/K,S)$ and so every connected component of $\Gamma_n$ is $c$-expander.\\
\\
Let $\Lambda_n$ be the graph on $V_n$ corresponding to $\{\sigma_n^{\omega(s)}:~s\in S\}$. The hypothesis that every $B$-orbit is contained in an $A$-orbit implies that the vertex set of every $\Lambda_n$-connected component is contained in the vertex set of a $\Gamma_n$-connected component.  \\
\\
For the remainder of Subsection \ref{sec:general-proof} we fix $n \in \mathbb{N}$ such that $\sigma_n$ is a sufficiently good sofic approximation for certain conditions stated later to hold. We suppress the subscript $n$ in notations.\\
\\
Let $\Omega_1,\ldots,\Omega_m$ be an enumeration of the connected components of $\Gamma$ such that $|\Omega_j| \geq |\Omega_{j+1}|$ for all $j \in \{1,\ldots,m-1\}$. Let $D$ be the set of all $w \in V$ such that $w \in \Omega_j$ and $\sigma^t(w) \in \Omega_k$ where $j \leq k$. If $\sigma$ is a sufficiently good sofic approximation then for at least $\frac{9}{10}|V|$ vertices $w \in V$ we must have that $(\sigma^t)^2(w) = w$. If the last condition is satisfied then at least one of $w$ and $\sigma^t(w)$ is an element of $D$. Therefore $|D| \geq \frac{9}{20}|V|$. \\
\\
Let $\mathcal{I} \subseteq \{1,\ldots,m\}$ be the set of all indexes $j$ such that $|D \cap \Omega_j| \geq \frac{1}{10}|\Omega_j|$. We must have \begin{equation} \label{eq.thalia-2} \sum_{j \in \mathcal{I}} |\Omega_j| \geq \frac{|V|}{10}. \end{equation} Fix $j \in \mathcal{I}$ and consider the set $D \cap \Omega_j$. Let $\Theta_1,\ldots,\Theta_r$ be the partition of $D \cap \Omega_j$ into the intersections of $D \cap \Omega_j$ with $\sigma^t$-pre-images of connected components of $\Lambda$. Let $q \in \{1,\ldots,r\}$ and suppose $\sigma^t(\Theta_q) \subseteq \Omega_k$. Since $\sigma^t(\Theta_q)$ is contained in a single connected component of $\Lambda$ we see from (\ref{eq.becky-6}) that $\l |\sigma^t(\Theta_q)| \leq |\Omega_k|$. Since $\Theta_q \subseteq D$ we have $j \leq k$ so that $|\Omega_j| \geq |\Omega_k|$ and therefore $\l |\Theta_q| \leq  |\Omega_j|$ which implies $(\l-1)|\Theta_q| \le |\Omega_j \setminus \Theta_q|$.\\
\\
Since $\Theta_q \subseteq\Omega_j$ we have $\partial_\Gamma \Theta_q = \partial_\Gamma (\Omega_j \setminus \Theta_q)$. Since every connected component of $\Gamma$ is a $c$-expander,
$$|\partial_\Gamma \Theta_q| \ge c \min\left( |\Theta_q|, |\Omega_j \setminus \Theta_q|\right) \ge c \min(1, \l-1) |\Theta_q|.$$
\\
Let $c' = c\min(1, \l-1)$. So \begin{equation} |\partial_\Gamma \Theta_1 \cup \cdots \cup \partial_\Gamma \Theta_r| \geq \frac{c'}{2}(|\Theta_1| + \cdots + |\Theta_r|) = \frac{c'}{2}|D \cap \Omega_j| \geq \frac{c'}{20}|\Omega_j| \label{eq.thalia-1} \end{equation} Here the first inequality holds because the pairwise disjointness of the $\Theta_q$ guarantees that for any edge $e$ there are at most two indices $q$ such that $e \in \partial_\Gamma \Theta_q$.\\
\\
Let $q \in \{1,\ldots,r\}$, let $v \in \Theta_q$ and suppose $(v,w)$ is an edge in $\partial_\Gamma \Theta_q$. If $w \notin D$ then $\sigma^t(v)$ and $\sigma^t(w)$ are in different connected components of $\Gamma$, and so in particular they are in different connected components of $\Lambda$. On the other hand, if $w \in D$ then by hypothesis $\sigma^t(v)$ and $\sigma^t(w)$ are in different connected components of $\Lambda$. Hence in either case $(\sigma^t(v),\sigma^t(w))$ is not an edge in $\Lambda$. \\
\\
From (\ref{eq.thalia-1}) we see that for at least $\frac{c'}{20}|\Omega_j|$ edges $(v,w)$ in $\Gamma \rest \Omega_j$ the image $(\sigma^t(v),\sigma^t(w))$ is not an edge in $\Lambda$. Summing this over all $j \in \mathcal{I}$ we see from (\ref{eq.thalia-2}) that there is a set $K$ of edges in $\Gamma$ with $|K| \geq \frac{c'}{200}|V|$ such that for each $(v,w) \in K$ the image $(\sigma^t(v),\sigma^t(w))$ is not an edge in $\Lambda$. However, if $\sigma$ is a sufficiently good sofic approximation then the number of such edges should be an arbitrarily small fraction of $|V|$. Thus we have obtained a contradiction and the proof of Theorem \ref{thm:general} is complete.

\section{Subgroups of special linear groups} \label{sec.prelim}

In Section \ref{sec.prelim} we will prove that $\mathrm{PSL}_d(\mathbb{Z})$ satisfies the conditions of Theorem \ref{thm:general} for $d\ge 5$, thereby completing the proof of Theorem \ref{thm}

\subsection{Ping-pong arguments}

The next Lemma constructs the subgroups $A$ and $B$ that will be used in our application of Theorem \ref{thm:general}. 

\begin{lemma}\label{prop:ping-pong}
Let $d\ge 3$. Identify $\mathrm{PSL}_2(\mathbb{Z})$ with the image in $\mathrm{PSL}_d(\mathbb{Z})$ of the copy of $\mathrm{SL}_2(\mathbb{Z})$ in the upper left corner of $\mathrm{SL}_d(\mathbb{Z})$. Then there exist subgroups $A$ and $B$ of $\mathrm{PSL}_d(\mathbb{Z})$ such that the following hold.
\begin{enumerate}[label={(\arabic*)}]
\item $A$ and $B$ are free groups of rank 4,
\item $A$ is profinitely dense in $\mathrm{PSL}_d(\mathbb{Z})$,
\item $B$ is contained in $\mathrm{PSL}_2(\mathbb{Z})$ and
\item the subgroup $\langle A,B\rangle$ is free of rank 8.
\end{enumerate}
\end{lemma}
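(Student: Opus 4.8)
The plan is to construct $A$ and $B$ explicitly as ping-pong subgroups inside $\mathrm{PSL}_d(\mathbb{Z})$, using a single clever choice of matrices and the classical ping-pong lemma to control freeness, while separately guaranteeing profinite density of $A$ via a strong approximation argument. I would proceed in roughly four stages.

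First, I would fix a convenient free subgroup $F \le \mathrm{PSL}_2(\mathbb{Z})$ of rank $2$ — for instance the one generated by $\begin{psm} 1 & m \\ 0 & 1 \end{psm}$ and $\begin{psm} 1 & 0 \\ m & 1 \end{psm}$ for $m \ge 2$, which is free by the classical ping-pong argument on $\mathbb{P}^1(\mathbb{R})$. Passing to a further finite-index (hence still free) subgroup of large enough rank, I can extract inside $\mathrm{PSL}_2(\mathbb{Z})$ a free group of rank $8$, and I will take $B$ to be a rank-$4$ free subgroup of it; this immediately gives conditions (1) and (3). The real work is arranging $A$: it must be free of rank $4$, profinitely dense in $\mathrm{PSL}_d(\mathbb{Z})$, and play ping-pong against $B$ so that $\langle A, B\rangle$ is free of rank $8$.

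Second, for profinite density I would follow the strategy of \cite{MR3228932} (with precursors \cite{MR1090825, MR3569567, MR0171856}): choose four elements $a_1,\dots,a_4 \in \mathrm{SL}_d(\mathbb{Z})$ generating a Zariski-dense subgroup of $\mathrm{SL}_d$, so that by strong approximation the reduction mod $N$ of $\langle a_1,\dots,a_4\rangle$ surjects onto $\mathrm{SL}_d(\mathbb{Z}/N\mathbb{Z})$ for all $N$; this forces the image in $\mathrm{PSL}_d(\mathbb{Z})$ to be profinitely dense, giving (2). To also get freeness of rank $4$ for $A=\langle a_1,\dots,a_4\rangle$ and simultaneously the ping-pong against $B$, I would arrange all the $a_i$ to be ``very contracting'' for the action of $\mathrm{PSL}_d(\mathbb{Z})$ on $\mathbb{P}^{d-1}(\mathbb{R})$ (or on a suitable flag variety) — e.g. high powers of elements with a dominant eigenvalue and with attracting/repelling directions in general position — while the elements of $B$, living in the $\mathrm{PSL}_2$-block, have their attracting/repelling data confined to (a neighbourhood of) the distinguished $2$-plane and its complement. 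Then one produces disjoint open ping-pong domains $U_A$ (a union of small balls around the attracting points of the $a_i^{\pm 1}$) and $U_B$, with each generator of $A$ pushing the complement of its own small ball into $U_A$, and each generator of $B$ doing likewise into $U_B$; the ping-pong lemma for the combined alphabet of $8$ generators then yields that $\langle A,B\rangle$ is free of rank $8$, whence (4), and restricting attention to the $A$-alphabet gives freeness of rank $4$ for $A$, whence (1).

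The main obstacle is the simultaneous satisfaction of \emph{all four} requirements with one choice of data: profinite density wants the $a_i$ to be ``spread out'' enough to be Zariski-dense (ruling out hiding them in a proper block), while the ping-pong scheme wants tight control on where their attracting/repelling flags sit so as to keep the $A$-domain disjoint from the $B$-domain. I expect the resolution to be that Zariski-density is an open and dense condition that can be met by an arbitrarily small perturbation, and once the combinatorial positions of the attracting/repelling flags are generic (away from the $\mathrm{PSL}_2$-plane data controlling $B$), replacing each $a_i$ by a sufficiently high power makes its contraction as strong as needed without disturbing Zariski-density. Verifying that these high powers still admit disjoint ping-pong domains compatible with the $B$-domain — i.e. the quantitative transversality estimate on the flag variety — is the technical heart, and is exactly where the cited ping-pong argument of \cite{MR3228932} is invoked.
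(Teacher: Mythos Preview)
Your outline has the right architecture---take a profinitely dense free subgroup from \cite{MR3228932}, take a free subgroup of $\mathrm{PSL}_2(\mathbb{Z})$, and set up ping-pong on $\mathbb{RP}^{d-1}$ so the combined eight generators are free---but there is a genuine gap in the step where you pass to high powers.

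You write that ``replacing each $a_i$ by a sufficiently high power makes its contraction as strong as needed without disturbing Zariski-density.'' Even granting that Zariski density survives, \emph{profinite density does not}: if $\langle a_1,\dots,a_4\rangle$ surjects onto $\mathrm{PSL}_d(\mathbb{Z}/p\mathbb{Z})$, there is no reason $\langle a_1^n,\dots,a_4^n\rangle$ should (for instance each $a_i^n$ could land in a proper subgroup mod small primes). Relatedly, your appeal to strong approximation only yields surjectivity of the reduction mod $N$ for $N$ coprime to some finite bad set of primes, not for all $N$; profinite density is strictly stronger, and this is exactly why \cite{MR3228932} is needed as more than a ping-pong input.

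The paper resolves this by exploiting the extra structure coming from \cite{MR3228932}: the profinitely dense generators $g_1,\dots,g_4$ form a \emph{$g_0$-rooted free system}, meaning there is an auxiliary hyperbolic element $g_0$ playing ping-pong against all of them. Rather than raising the $g_j$ to powers, the paper \emph{conjugates} them, setting $s_j = g_0^{-M} g_{j-4}\, g_0^{M}$ for $j=5,\dots,8$. Conjugation manifestly preserves profinite density of the subgroup they generate, while raising $M$ moves the associated ping-pong domains $U_j = g_0^{-M}(O_{j-4})$ into the small set $O_0$, which can be arranged disjoint from the $\mathrm{PSL}_2$-side domains. On the $B$ side one is free to take high powers $h_j^N$, since no density is required of $B$. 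This conjugation-versus-powers distinction is the key idea your plan is missing.
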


 \begin{proof}[Proof of Lemma \ref{prop:ping-pong}]
 By the main theorem of \cite{MR3228932} there exists a profinitely dense free subgroup $A$ of $\mathrm{PSL}_d(\mathbb{Z})$ with rank 4. The construction of this subgroup gives additional information about $A$ that we will use. To describe this, we recall the following notions from \cite{MR3228932}. \\
 \\
An element $g \in \mathrm{PSL}_d(\mathbb{Z})$ is {\bf hyperbolic} if it is semisimple, admits a unique (counting multiplicities) eigenvalue of maximum absolute value and a unique eigenvalue of minimum absolute value. Let $\{v_1,v_2,\ldots, v_d\}$ be a basis of generalized eigenvectors such that $v_1$ corresponds to the unique maximal eigenvalue of $g$ and $v_n$ corresponds to the unique minimal eigenvalue. Let $\alpha(g)=[v_1] \in \R\P^{d-1}$ and $\rho(g) = [\textrm{span}(v_2,\ldots,v_d)] \subseteq\R\P^{d-1}$. These are the attracting fixed point and repelling hyperplane of $g$. Note that $\alpha(g^{-1}) = [v_d]$ and $\rho(g^{-1}) =  [\textrm{span}(v_1,\ldots,v_{d-1})].$ Although $g$ need not be diagonalizable, $\rho(g)$ does not depend on the choice of basis $\{v_1,\ldots, v_d\}$. 
 
\begin{definition} \label{def.root} Let $g_0,g_1,\ldots, g_s \in \mathrm{PSL}_d(\mathbb{Z})$ be hyperbolic elements. Then $\{g_1,\ldots, g_s\}$ is a {\bf $g_0$-rooted free system} if there exist open sets $O_j \subseteq\R\P^{d-1}$ for $j\in \{0,1,\ldots, s\}$ such that the following hold.
 \begin{enumerate}[label={(\arabic*)}]
 \item The sets $\{O_j\}_{j=0}^s$ are pairwise disjoint, 
 \item for all $j \in \{1,\ldots,s\}$ we have  \begin{equation} \label{eq.coke-8} \{\a(g_j) , \a(g_j^{-1})\} \subseteq O_j \subseteq \overline{O_j} \subseteq \R \P^{d-1} \setminus (\rho(g_0) \cup \rho(g_0^{-1})) \end{equation}
 \item we have \[ \{\a(g_0), \a(g_0^{-1})\} \subseteq O_0 \subseteq \overline{O_0} \subseteq\R\P^{d-1} \setminus \left(\bigcup_{j=1}^s \rho(g_j)\cup\rho(g_j^{-1})\right) \]
 \item and $g_j(\overline{O_k}) \cup g_j^{-1}(\overline{O_k}) \subseteq O_j$ for all distinct pairs $j,k \in \{0,\ldots, s\}$.
 \end{enumerate} \end{definition}
 
The construction in \cite{MR3228932} shows that there exist hyperbolic elements $g_0,g_1,g_2,g_3,g_4 \in \mathrm{PSL}_d(\mathbb{Z})$ such that $\{g_1,g_2,g_3,g_4\}$ is a $g_0$-rooted free system and the subgroup $\langle g_1,g_2,g_3,g_4\rangle$ is profinitely dense. (The fourth clause in the definition of a $g_0$-rooted free system that we use differs slightly from the one used in \cite{MR3228932}. However, it is easy to verify that their proof gives a $g_0$-rooted free system in our sense.) We make the following claim.
 
\begin{claim} \label{cla-1} After conjugating the elements above if necessary, we may assume that $\rho(g_j)\cup \rho(g_j^{-1})$ does not contain $[\textrm{span}(e_1,e_2)]$ and $\alpha(g_j)\cup\alpha(g_j^{-1})$ does not intersect $[\textrm{span}(e_3,\ldots,e_d)]$ for any $j \in \{0,\ldots,4\}$. \end{claim}
 
\begin{proof}[Proof of Claim \ref{cla-1}] We claim that if $V,W \subset \mathbb{R}^d$ are subspaces, then the set of all elements $h \in \mathrm{SL}_d(\mathbb{R})$ such that $h(V) \subset W$ is Zariski-closed. Since intersections of closed sets are closed, it suffices to consider the special case in which $V$ is spanned by a single vector $v$. Let $\pi:\mathbb{R}^d \to W$ be orthogonal projection. Then $h \mapsto \|hv- \pi(hv) \|^2_2$ is a polynomial whose zero set is exactly the set of $h \in \mathrm{SL}_d(\mathbb{R})$ such that $hv \in W$. This proves the claim.\\
\\
By abuse of notation, we identify $\rho(g_j)$ and $\alpha(g_j)$ with their associated subspaces in $\mathbb{R}^d$. Let $V_j$ be the set of all $h \in   \mathrm{SL}_d(\mathbb{R})$ such that $h(\rho(g_j)\cup \rho(g_j^{-1}))$ does not contain $\textrm{span}(e_1,e_2)$.  Let $W_j$ be the set of all $h \in  \mathrm{SL}_d(\mathbb{R})$ such that $h(\alpha(g_j)) \notin \textrm{span}(e_3,\ldots,e_d)$ and $h(\alpha(g^{-1}_j)) \notin \textrm{span}(e_3,\ldots,e_d)$. By the previous paragraph both $V_j$ and $W_j$ are Zariski open. They are non-empty because for every $k$, the group $\mathrm{SL}_d(\mathbb{R})$ acts transitively on the set of $k$-dimensional subspaces. \\
\\
Since $\mathrm{SL}_d(\mathbb{R})$ is connected, it is Zariski-connected. So the intersection of any finite collection of non-empty Zariski open subsets is non-empty. In particular, $\bigcap_{j=0}^4(V_j \cap W_j)$ is nonempty and Zariski-open. By the Borel density Theorem, $\mathrm{SL}_d(\mathbb{Z}) $ is Zariski dense in $\mathrm{SL}_d(\mathbb{R})$. Therefore the set \begin{equation} \label{eq.becky-5} \mathrm{SL}_d(\mathbb{Z}) \cap \left( \bigcap_{j=0}^4 (V_j \cap W_j) \right) \end{equation} is non-empty. Let $h \in \mathrm{PSL}_d(\mathbb{Z})$ be the image of an element of the set in (\ref{eq.becky-5}). Replacing each $g_j$ with $hg_jh^{-1}$ proves Claim \ref{cla-1}.

\end{proof}

Let $\pi: \R\P^{d-1} \setminus [\textrm{span}(e_3,\ldots,e_d)]\to [\textrm{span}(e_1,e_2)]$ denote the projection map. Claim   \ref{cla-1} shows that the points $\pi(\a(g^{\pm 1}_j))$ are well-defined.\\
\\
It is well-known that given any finite subset $F$ of $\R\P^1$, there exists a hyperbolic element $f \in \mathrm{PSL}_2(\mathbb{Z}) $ which has no fixed point in $F$. Therefore we can find hyperbolic elements $h_1,h_2,h_3,h_4  \in \mathrm{PSL}_2(\mathbb{Z}) $ such that the following hold.
\begin{align} 
 \left( \bigcup_{j=1}^4 \{\alpha(h_j),\alpha(h_j^{-1})\} \right) &\cap \left( \bigcup_{k=0}^4 \pi\left(\left\{\a(g_k), \a\left(g_k^{-1}\right)\right\} \right) \right) = \emptyset \label{eq.coke-4.23} \\
 \{\a(h_j),\a(h_j^{-1})\} & \cap \{\a(h_k), \a(h_k^{-1})\} = \emptyset \label{eq.coke-5}\\
 \{\alpha(g_j),\alpha(g_j^{-1})\} &\cap \{\alpha(h_k),\alpha(h_k^{-1})\} = \emptyset \label{eq.coke-6}\\
 \left( \bigcup_{j=1}^4 \{\a(h_j), \a(h_j^{-1})\} \right)  &\cap \left(\bigcup_{k=0}^4 (\rho(g_k)\cup \rho(g_k^{-1}))\right) = \emptyset.\label{eq.coke-3}
\end{align}

Here, the set in (\ref{eq.coke-5}) is empty for all distinct pairs $j,k \in \{1,2,3,4\}$ and the set in (\ref{eq.coke-6}) is empty for all $j \in \{0,\ldots,4\}$ and all $k \in \{1,\ldots,4\}$. Equation (\ref{eq.coke-3}) is justified by the first part of Claim  \ref{cla-1} which implies $\left(\bigcup_{k=0}^4 (\rho(g_k)\cup \rho(g_k^{-1}))\right) \cap [\textrm{span}(e_1,e_2)]$ is finite. \\
\\
Let $j \in \{1,\ldots, 4\}$ and $k\in \{0,\ldots, 4\}$. Since $h_j \in \mathrm{PSL}_2(\mathbb{Z})$, $\rho(h_j)$ is the span of $\a(h_j^{-1}) \in [\textrm{span}(e_1,e_2)]$ and  $[\textrm{span}(e_3,\ldots, e_d)]$. So the projection $\pi$ maps $\rho(h_j) \setminus [\textrm{span}(e_3,\ldots,e_d)]$ to the point $\a(h_j^{-1})$. So (\ref{eq.coke-4.23}) implies

\begin{equation} \left( \bigcup_{j=1}^4 (\rho(h_j) \cup \rho(h_j^{-1})) \right)  \cap   \left(\bigcup_{k=0}^4 \{\a(g_k), \alpha(g_k^{-1})\}\right) = \emptyset. \label{eq.coke-4} \end{equation}

Let $\Delta$ be the standard metric on $\mathbb{RP}^{d-1}$ and for $\epsilon > 0$ and $S \subseteq \mathbb{RP}^{d-1}$ write \[ \mathrm{B}_\epsilon(S) = \{v \in \mathbb{RP}^{d-1}:\Delta(v,S) < \epsilon \}  \] Observe that for any hyperbolic element $\ell \in \mathrm{PSL}_2(\mathbb{Z})$, there exists a sequence $(\epsilon_n)_{n=1}^\infty$ of positive numbers decreasing to zero such that \begin{equation} \label{eq.coke-1} \ell^n\bigl(\mathbb{RP}^{d-1} \setminus \mathrm{B}_{\epsilon_n}(\rho(\ell)\cup \rho(\ell^{-1})) \bigr) \subseteq \mathrm{B}_{\epsilon_n}(\{\alpha(\ell),\alpha(\ell^{-1}\}). \end{equation}
So after replacing $g_0$ with $g_0^n$ for a sufficiently large $n$ we may replace $O_0$ with a smaller open neighborhood of $\{\alpha(g_0),\alpha(g_0^{-1})\}$. Thus using (\ref{eq.coke-4}) we may assume that 
\begin{equation} 
\overline{O_0} \cap \left( \bigcup_{j=1}^4 (\rho(h_j)\cup\rho(h_j^{-1}))\right) = \emptyset.  \label{eq.coke-18} 
\end{equation}

By (\ref{eq.coke-5}, \ref{eq.coke-3}) there exist open neighborhoods $U_1,U_2,U_3,U_4$ of $\{\alpha(h_1),\alpha(h_1)^{-1}\},\ldots,\{\alpha(h_4),\alpha(h_4^{-1})\}$ respectively such that for each distinct pair $j,k$ in $\{1,2,3,4\}$,
\begin{equation} \overline{U_j} \cap (\rho(h_k) \cup \rho(h_k^{-1}) \cup \rho(g_0) \cup \rho(g_0^{-1})) = \emptyset.
\label{eq.coke-19} \end{equation}

By (\ref{eq.coke-1}, \ref{eq.coke-18}, \ref{eq.coke-19}) there exists $N \in \mathbb{N}$ such that 
\begin{equation} h_j^N(\overline{U_k} \cup \overline{O_0}) \cup h_j^{-N}(\overline{U_k} \cup \overline{O_0}) \subseteq U_j \label{eq.coke-20} \end{equation} for all distinct pairs $j,k \in \{1,\ldots,4\}$. By (\ref{eq.coke-1}, \ref{eq.coke-19}) there exists $M \in \mathbb{N}$ such that 
\begin{equation} g_0^M(\overline{U_k} \cup \overline{O_k}) \cup g_0^{-M}(\overline{U_k} \cup \overline{O_k}) \subseteq O_0 \label{eq.coke-2end} \end{equation} for all distinct pairs $k \in \{1,\ldots,4\}$. Write $s_j = h_j^N$ for $j \in \{1,\ldots,4\}$ and $s_j = g_0^{-M}g_{j-4}g_0^M$ for $j \in \{5,\ldots,8\}$. Also write $U_j = g_0^{-M}(O_{j-4})$ for $j \in \{5,\ldots,8\}$.\\
\\
We claim that the set $\{s_1,\ldots,s_8\}$ generates a free group of rank $8$. This will suffice to establish Lemma \ref{prop:ping-pong} since the elements $\{s_5,\ldots,s_8\}$ generate a profinitely dense subgroup of $\mathrm{PSL}_d(\mathbb{Z})$ and since $\{s_1,\ldots,s_4\} \subseteq \mathrm{PSL}_2(\mathbb{Z})$. We claim that \begin{equation} \label{eq.coke-11} (s_j(\overline{U_k}) \cup s_j^{-1}(\overline{U_k})) \subseteq U_j \end{equation} for all distinct pairs $j,k \in \{1,\ldots,8\}$. By the standard ping-pong lemma from \cite{MR0286898} this suffices to establish the claim. \\
\\
Suppose first that $j,k \in \{1,\ldots,4\}$. Then (\ref{eq.coke-11}) follows from (\ref{eq.coke-20}). Suppose now that $j \in \{1,\ldots,4\}$ and $k \in \{5,\ldots,8\}$. The definitions of $s_j$ and $U_k$ imply (\ref{eq.coke-11}) is equivalent to $h_j^{\pm N} g_0^{-M} \overline{O_{k-4}} \subset U_j$. This is true because (\ref{eq.coke-2end}) implies $g_0^{-M} \overline{O_{k-4}} \subset O_0$ and (\ref{eq.coke-20}) implies $h_j^{\pm N} O_0 \subset U_j$.\\
\\
Suppose now that $k \in \{1,\ldots,4\}$ and $j \in \{5,\ldots,8\}$. The definitions of $s_j$ and $U_k$ imply (\ref{eq.coke-11}) is equivalent to $g_0^{-M}g_{j-4}^{\pm 1} g_0^{M} \overline{U_{k}} \subset g_0^{-M}O_{j-4}$. This is true because (\ref{eq.coke-2end}) implies $g_0^{M} \overline{U_k} \subset O_0$ and the fourth item in Definition \ref{def.root} implies $g_{j-4}^{\pm 1} O_0 \subset O_{j-4}$. \\
\\
Finally, suppose $j,k \in \{5,\ldots,8\}$. The definitions of $s_j$ and $U_k$ imply (\ref{eq.coke-11}) is equivalent to $$g_0^{-M}g_{j-4}^{\pm 1} g_0^{M} g_0^{-M}\overline{O_{k-4}} \subset g_0^{-M}O_{j-4}.$$
This simplifies to $g_{j-4}^{\pm 1}\overline{O_{k-4}} \subset O_{j-4}$. This is true by the fourth item in Definition \ref{def.root}. This completes the proof of Lemma \ref{prop:ping-pong}.  \end{proof}

\subsection{Expansion in quotients of $\mathrm{PSL}_d(\mathbb{Z})$} \label{subsec.expand}

\begin{lemma} \label{lem.right} Let $d \geq 3$. Let $A$ be a finitely generated profinitely dense subgroup of $\mathrm{PSL}_d(\mathbb{Z})$. Then $A$ has property $(\tau)$ with respect to the family \[ \{K \cap A:~ K \le \mathrm{PSL}_d(\mathbb{Z}), [\mathrm{PSL}_d(\mathbb{Z}):K]<\infty\} \] \end{lemma}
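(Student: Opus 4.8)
The plan is to derive the lemma from two external inputs: the congruence subgroup property of $\mathrm{PSL}_d(\mathbb{Z})$, and super-approximation, i.e.\ uniform expansion of the congruence quotients of a Zariski-dense subgroup of $\mathrm{SL}_d(\mathbb{Z})$. Fix a finite symmetric generating set $S$ of $A$ once and for all; since Property $(\tau)$ is independent of the choice of generating set, it suffices to produce a single $c>0$ such that $\Sch(A/(A\cap K),S)$ is a $c$-expander for \emph{every} finite index subgroup $K \le \mathrm{PSL}_d(\mathbb{Z})$.

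The first step is to reduce to congruence subgroups. Since $d \ge 3$, $\mathrm{SL}_d(\mathbb{Z})$ has the congruence subgroup property \cite{MR0171856}, and hence so does its central quotient $\mathrm{PSL}_d(\mathbb{Z})$; thus every finite index $K$ contains $\overline{\Gamma}_q := \ker\big(\mathrm{PSL}_d(\mathbb{Z}) \to \mathrm{PSL}_d(\mathbb{Z}/q\mathbb{Z})\big)$ for some $q \in \mathbb{N}$. Because $\overline{\Gamma}_q$ is normal, the map $a(A\cap\overline{\Gamma}_q) \mapsto a(A\cap K)$ is a well-defined, surjective, $A$-equivariant morphism $\Sch(A/(A\cap\overline{\Gamma}_q),S) \to \Sch(A/(A\cap K),S)$ that restricts to a bijection between the fibres lying over the two ends of each edge, i.e.\ a covering of graphs. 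For any finite covering $\phi\colon X \to Y$ of degree $m$ and any $W \subseteq V(Y)$ one has $|\phi^{-1}(W)| = m|W|$ and $|\partial_X \phi^{-1}(W)| = m|\partial_Y W|$, while loops contribute to no edge boundary; hence $\iota_X(\phi^{-1}(W)) = \iota_Y(W)$ and therefore $e(Y) \ge e(X)$. So it is enough to bound $e\big(\Sch(A/(A\cap\overline{\Gamma}_q),S)\big)$ from below, uniformly over $q$.

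The second step identifies these graphs with Cayley graphs. Profinite density of $A$ makes the composition $A \hookrightarrow \mathrm{PSL}_d(\mathbb{Z}) \to \mathrm{PSL}_d(\mathbb{Z}/q\mathbb{Z})$ surjective with kernel $A\cap\overline{\Gamma}_q$, so $\Sch(A/(A\cap\overline{\Gamma}_q),S)$ is exactly the Cayley graph of $\mathrm{PSL}_d(\mathbb{Z}/q\mathbb{Z})$ on the image of $S$. Lifting $S$ to a finite symmetric subset $\widetilde{S} \subset \mathrm{SL}_d(\mathbb{Z})$ and applying the covering monotonicity from the previous paragraph to the central quotient $\mathrm{SL}_d(\mathbb{Z}/q\mathbb{Z}) \to \mathrm{PSL}_d(\mathbb{Z}/q\mathbb{Z})$, it suffices to show that the Cayley graphs of $\mathrm{SL}_d(\mathbb{Z}/q\mathbb{Z})$ on the reductions of $\widetilde{S}$ form a family of expanders, uniformly over all $q \in \mathbb{N}$. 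Now $A$ is Zariski-dense in $\mathrm{PSL}_d$ — were its Zariski closure proper, that subgroup would have too few points modulo $p$ to surject onto $\mathrm{PSL}_d(\mathbb{F}_p)$ for large $p$, contradicting profinite density — and hence $\langle \widetilde{S}\rangle$ is Zariski-dense in $\mathrm{SL}_d$. So the displayed expansion statement is precisely the super-approximation theorem for arbitrary moduli (Bourgain–Varjú). This supplies the uniform constant $c>0$ and completes the proof.

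The genuinely hard ingredient is super-approximation; the rest is bookkeeping, the only delicate points being the monotonicity of the edge expansion constant under the covering maps above (in the presence of loops and multi-edges) and the routine transit between $\mathrm{SL}_d$ and $\mathrm{PSL}_d$. I also note that the hypothesis $d \ge 3$ is used only to invoke the congruence subgroup property, which is what legitimises reducing from arbitrary finite index subgroups to congruence quotients; for $d = 2$ this reduction is unavailable.
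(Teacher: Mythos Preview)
Your proof is correct and is essentially the same as the paper's: both invoke the congruence subgroup property \cite{MR0171856} to reduce to congruence quotients, use that profinite density implies Zariski density, cite the Bourgain--Varj\'u super-approximation theorem \cite{MR2897695} for the uniform expansion of those quotients, and then push expansion down along the covering $\Sch(A/(A\cap\overline\Gamma_q),S)\to\Sch(A/(A\cap K),S)$. The only cosmetic difference is that you route explicitly through $\mathrm{SL}_d$ before descending to $\mathrm{PSL}_d$, whereas the paper applies the expansion result directly to the Cayley graphs of $\pi_q(A)\cong\mathrm{PSL}_d(\mathbb{Z}/q\mathbb{Z})$; this extra step is harmless but unnecessary.
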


\begin{proof} Because $A$ is profinitely dense, it is Zariski dense. Let $S \subseteq A$ be a finite generating set. Write $\pi_q:\mathrm{PSL}_d(\mathbb{Z}) \twoheadrightarrow \mathrm{PSL}_d(\mathbb{Z}/q\mathbb{Z})$ for reduction modulo $q$. Theorem 1 in \cite{MR2897695} asserts that the Cayley graphs of $\pi_q(A)$ with respect to $S$ form a family of $c$-expanders for some $c>0$. Let $K\le  \mathrm{PSL}_d(\mathbb{Z})$ have finite index. By the congruence subgroup property as established in \cite{MR0171856}, there exists a $q \in \mathbb{N}$ such that $K$ contains the kernel $\G_q$ of the natural surjection $\mathrm{PSL}_d(\mathbb{Z}) \twoheadrightarrow \mathrm{PSL}_d(\mathbb{Z}/q\mathbb{Z}).$ It follows that the quotient map $\pi_q(A) \twoheadrightarrow A/(A \cap K)$ induces a covering space \[ \Sch(\pi_q(A), S) \twoheadrightarrow \Sch(A/(A \cap K), S) \]  Therefore the preimage of a subset $D$ of $A/(A \cap K)$ has the same edge isoperimetric ratio as $D$. Since $\Sch(\pi_q(A), S)$ is a $c$-expander, so is $\Sch(A/(A \cap K), S)$.  \end{proof}

\subsection{Bounds on the density of $\mathrm{PSL}_2(\mathbb{Z})$-orbits in finite quotients of $\mathrm{PSL}_d(\mathbb{Z})$}

The main result of this section is Lemma \ref{lem.small}, which provides an upper bound on densities of $\mathrm{PSL}_2(\mathbb{Z})$-orbits in finite quotients of $\mathrm{PSL}_d(\mathbb{Z})$. First we prove two lemmas that allow us to reduce the general case to the  $\mathrm{PSL}_d(\mathbb{Z}/p\mathbb{Z})$ case.

\begin{lemma}\label{semisimple}
Let $G=H_1 \times \cdots \times H_k$ be a direct product of simple groups $H_i$. Identify each group $H_i$ with the corresponding subgroup of $G$. For each subset $S \subset \{1,\ldots, k\}$ let $H_S\le G$ be the product of the   subgroups $H_i$ over $i\in S$ and let $\pi_S:G \to H_S$ be the projection. Also write $\pi_i = \pi_{\{i\}}$. Suppose that for each $i\ne j$, $H_i$ is not isomorphic to $H_j$. Then the following statements hold.
\begin{enumerate}
\item If $N \le G$ is normal then there exists a subset $S$ such that $N=H_S$.
\item If $K \le G$ is maximal then there exists an index $i$ such that $\pi_i(K)< H_i$ is a proper subgroup of $H_i$. 
\end{enumerate}

\end{lemma}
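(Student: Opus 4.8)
The plan is to prove both statements using the structure theory of direct products of pairwise non-isomorphic simple groups, together with elementary projection arguments.

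\textbf{Statement (1).} Let $N \le G$ be normal. The plan is to show $N = \prod_{i \in S} H_i$ where $S = \{i : \pi_i(N) = H_i\}$. First I would note that $\pi_i(N) \trianglelefteq H_i$ (the image of a normal subgroup under the surjection $\pi_i$ is normal), so by simplicity of $H_i$ each $\pi_i(N)$ is either trivial or all of $H_i$. Clearly $N \subseteq \prod_{i \in S} H_i = H_S$, since if $i \notin S$ then $\pi_i(N)$ is trivial. For the reverse containment, the key step is to show $H_i \subseteq N$ for each $i \in S$. Fix $i \in S$. Consider the commutator subgroup $[N, H_i]$: since $N$ is normal this lies in $N$, and since $H_i$ is normal it lies in $H_i$, so $[N, H_i] \subseteq N \cap H_i$. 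Now $[N, H_i]$, as a normal subgroup of $H_i$ (being $[N \cap (\text{stuff}), H_i]$—more precisely, projecting: $[N, H_i] = [\pi_i(N), H_i] = [H_i, H_i] = H_i$ using that $\pi_i$ restricted to $H_i$ is the identity and that a nonabelian simple group is perfect, noting simple groups here are the finite simple groups $H_i = \mathrm{PSL}_d(\mathbb{Z}/p\mathbb{Z})$ which are nonabelian for the relevant parameters). Hence $H_i = [N, H_i] \subseteq N$. This gives $H_S \subseteq N$, completing the proof of (1). (If one wishes to allow abelian simple factors $\mathbb{Z}/p\mathbb{Z}$, the pairwise non-isomorphism hypothesis still forces $N$ to be a product of factors via the projections, using that a subdirect product of simple groups with all projections surjective and distinct factors must be the full product — this is Goursat-type reasoning; I expect the paper only needs the nonabelian case.)

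\textbf{Statement (2).} Let $K \le G$ be maximal; I want an index $i$ with $\pi_i(K) < H_i$ proper. Suppose for contradiction that $\pi_i(K) = H_i$ for all $i$. The strategy is to derive that $K = G$, contradicting maximality (which presupposes $K \ne G$). I would argue inductively or directly via Goursat's lemma: if $\pi_i(K) = H_i$ for every $i$ then $K$ is a subdirect product of the $H_i$. The key step is that a subdirect product of pairwise non-isomorphic simple groups is the whole product. One clean way: let $i$ be any index, and consider $K \cap H_i \trianglelefteq K$; I claim $K \cap H_i = H_i$. Using the non-isomorphism hypothesis and Goursat's lemma applied to the projection of $K$ onto $H_i \times H_S$ (with $S$ the complementary set), any "diagonal" identification would require an isomorphism between $H_i$ and a section of $H_S$, i.e. between $H_i$ and a subquotient that for simple groups forces $H_i \cong H_j$ for some $j \in S$ — contradiction. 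Hence the projection $K \to H_i \times H_S$ has image all of $H_i \times H_S$ whenever it has full image on each factor, and by induction on $k$ we get $K = G$.

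\textbf{Main obstacle.} The routine-looking part — showing a subdirect product of pairwise non-isomorphic simple groups fills the whole product — is the actual content, and the cleanest execution runs through Goursat's lemma: a subgroup of $H_1 \times H_2$ surjecting onto both factors corresponds to an isomorphism between a quotient of $H_1$ and a quotient of $H_2$; for simple groups the only quotients are the group itself and the trivial group, so a proper such subgroup forces $H_1 \cong H_2$. Iterating this over the $k$ factors is the technical heart. I would present statement (1) first (it is self-contained and quick via the commutator trick), then statement (2), reusing the non-isomorphism hypothesis through Goursat's lemma. The one place to be careful is whether "simple" here permits $\mathbb{Z}/p\mathbb{Z}$; since in the application the $H_i$ are $\mathrm{PSL}_d(\mathbb{Z}/p\mathbb{Z})$ with $d \ge 3$, these are nonabelian simple, and the perfectness used in (1) is automatic — but I would phrase the commutator argument to avoid even needing perfectness if possible, e.g. by invoking that $[N,H_i]$ is normal in $H_i$ and nontrivial (since $\pi_i(N) = H_i$ acts nontrivially by conjugation on the nonabelian $H_i$), hence equals $H_i$.
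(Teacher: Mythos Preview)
Your proposal is correct for the nonabelian case (which is all the application needs) but takes a genuinely different route from the paper. The paper proves both statements by a unified induction on $k$: for (1) it reduces to smaller $k$ by intersecting $N$ with a proper $H_S$ and then quotienting, and in the base case shows that if all such intersections are trivial then each $\pi_i$ is injective on $N$, forcing all $H_i$ to be isomorphic; for (2) it runs the same style of induction, using (1) to identify $H_S \cap K$ as some $H_T$ and again reaching the contradiction that $K \cong H_i$ for all $i$. Your argument is more direct: for (1) the commutator identity $[N,H_i]=[\pi_i(N),H_i]=[H_i,H_i]=H_i$ immediately gives $H_i \subseteq N$ whenever $\pi_i(N)=H_i$, and for (2) your Goursat-type claim that a subdirect product of pairwise non-isomorphic simple groups is the full product actually yields the stronger conclusion that \emph{every} proper subgroup (not just maximal ones) fails to surject onto some factor---which is exactly what Lemma~\ref{lem.proper} needs, so your version would let the paper drop the reduction to maximal $K$ there. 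The trade-off is that your proof of (1) uses perfectness and so does not cover abelian simple factors, whereas the paper's inductive proof does; since the application has $H_i = \mathrm{PSL}_d(\mathbb{Z}/p_i\mathbb{Z})$ with $d\ge 3$, this is harmless. One small wrinkle: the sentence ``consider $K\cap H_i \trianglelefteq K$; I claim $K\cap H_i = H_i$'' is not quite the right setup (you want normality in $H_i$, not in $K$), but your subsequent Goursat argument does not rely on it and stands on its own.
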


\begin{proof}

We prove this lemma by induction on $k$. The base case ($k=1$) is vacuous. For $S \subset \{1,\ldots, k\}$, let $S^c = \{1,\ldots, k\} \setminus S$ be the complement.\\ 
\\
Suppose, for some proper $S \subset \{1,\ldots, k\}$ that $N\cap H_S$ is nontrivial. The induction hypothesis applied to the inclusion $N \cap H_S \le H_S$ implies that $N \cap H_S = H_T$ for some nonempty $T \subset S$. Apply the induction hypothesis to the inclusion $N/H_T \le G/H_T \cong H_{T^c}$ to obtain a subset $R$ with $T \subset R \subset \{1,\ldots, k\}$ such that $N/H_T = H_R/H_T$. Therefore $N=H_R$. This proves the lemma in the special case that $N \cap H_S$ is nontrivial for some proper $S$. So we may assume without loss of generality that $N \cap H_S$ is trivial for all proper $S$. \\
\\
Without loss of generality, we assume $N$ is nontrivial. Since the kernel of $\pi_i$ is $H_{\{i\}^c}$ and $H_{\{i\}^c} \cap N$ is trivial, the restriction of $\pi_{i}$ to $N$ is injective. Since $H_i$ is simple and $\pi_{i}(N) \le H_i$ is a nontrivial normal subgroup, this implies that $N$ surjects onto $H_i$ for all $i$. Thus $N$ is isomorphic to $H_i$ for all $i$. In particular, there exist $i\ne j$ such that $H_i$ is isomorphic to $H_j$. This contradiction proves the first part of the lemma. \\
\\
Now suppose $K \le G$ is maximal. Let $S \subsetneq \{1,\ldots, k\}$ be proper. If $\pi_{S}(K) \ne H_{S}$ then apply the induction hypothesis to the inclusion $\pi_{S}(K) \le H_{S}$ to obtain the lemma. So we may assume without loss of generality that $\pi_{S}(K) = H_{S}$ for all proper subsets $S$. \\
\\
Again, let $S \subsetneq \{1,\ldots, k\}$ be proper. Since $H_{S}$ is normal in $G$, $H_{S} \cap K$ is normal in $K$. Since $\pi_{S}$ maps $K$ onto $H_{S}$, this implies $\pi_S(H_{S} \cap K)$ is normal in $H_{S}$. However, $\pi_{S}$ restricts to the identity on $H_{S}$. So $H_{S} \cap K$ is normal in $H_{S}$. So the first part of this lemma implies $H_{S} \cap K = H_{T}$ for some subset $T \subset S$. \\
\\
Suppose $K$ contains $H_T$ for some non-empty $T \subset \{1,\ldots, k\}$. Then we can apply the induction hypothesis to the inclusion $K/H_T \le G/H_T \cong H_{T^c}$ to finish the lemma. Therefore, we may assume without loss of generality that $K$ does not contain $H_T$ for any non-empty $T$. The previous paragraph now implies $H_{S} \cap K$ is trivial for all proper subsets $S$. Therefore, the map $\pi_{i}:G \to H_i$ is injective on $K$. Since it is also surjective (by the second paragraph before this one), this implies $K$ is isomorphic to $H_i$ for all $i$. In particular there are different indices $i\ne j$ such that $H_i$ is isomorphic to $H_j$. This completes the proof. 

\end{proof}

\begin{lemma} \label{lem.proper} Let $q \in \mathbb{N}$ and let $K \leq \mathrm{PSL}_d(\mathbb{Z}/q\mathbb{Z})$ be a proper subgroup. Then there is a prime factor $p$ of $q$ such that the image of $K$ under reduction $\mathrm{mod} \,\,p$ is a proper subgroup of $\mathrm{PSL}_d(\mathbb{Z}/p\mathbb{Z})$. \end{lemma}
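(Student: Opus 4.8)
The plan is to reduce, via the Chinese Remainder Theorem, to the prime‑power case and then to a statement about Frattini subgroups. Write $q=\prod_{i=1}^k p_i^{e_i}$, so that $\mathrm{PSL}_d(\mathbb{Z}/q\mathbb{Z})\cong\prod_{i=1}^k \mathrm{PSL}_d(\mathbb{Z}/p_i^{e_i}\mathbb{Z})$, and let $R_i$ be the kernel of the reduction $\mathrm{PSL}_d(\mathbb{Z}/p_i^{e_i}\mathbb{Z})\to\mathrm{PSL}_d(\mathbb{Z}/p_i\mathbb{Z})$ (a $p_i$‑group), and $R=\prod_i R_i\le \mathrm{PSL}_d(\mathbb{Z}/q\mathbb{Z})$. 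Since the Frattini subgroup of a finite direct product is the product of the Frattini subgroups, it suffices to establish the following prime‑power statement: for each prime power $p^e$, the kernel $R_p$ of $\mathrm{PSL}_d(\mathbb{Z}/p^e\mathbb{Z})\to\mathrm{PSL}_d(\mathbb{Z}/p\mathbb{Z})$ lies in $\Phi(\mathrm{PSL}_d(\mathbb{Z}/p^e\mathbb{Z}))$; granting this, $R\le\Phi(\mathrm{PSL}_d(\mathbb{Z}/q\mathbb{Z}))$. Now given a proper $K$, pick a maximal subgroup $M\supseteq K$. Then $R\le\Phi\le M$, so $M/R$ is a proper (maximal) subgroup of $\prod_i\mathrm{PSL}_d(\mathbb{Z}/p_i\mathbb{Z})$. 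For $d\ge 3$ these factors are simple and pairwise non‑isomorphic (distinct primes and fixed $d$ give groups of distinct orders), so Lemma~\ref{semisimple}(2) produces an index $i$ with the $i$‑th projection of $M/R$ proper in $\mathrm{PSL}_d(\mathbb{Z}/p_i\mathbb{Z})$; this projection is exactly the reduction of $M$ modulo $p_i$ and contains the reduction of $K$, which finishes the lemma modulo the prime‑power statement.

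I would prove the prime‑power statement by induction on $e$, the case $e=1$ being vacuous. For $e\ge 2$ let $V$ be the kernel of $\mathrm{PSL}_d(\mathbb{Z}/p^e\mathbb{Z})\to\mathrm{PSL}_d(\mathbb{Z}/p^{e-1}\mathbb{Z})$. This is an elementary abelian $p$‑group, the conjugation action of $G:=\mathrm{PSL}_d(\mathbb{Z}/p^e\mathbb{Z})$ on it factors through $\mathrm{PSL}_d(\mathbb{F}_p)$ (a standard feature of the congruence filtration), and a short matrix computation identifies it as the adjoint module — $\mathfrak{sl}_d(\mathbb{F}_p)$ when $p\nmid d$, and $\mathfrak{sl}_d(\mathbb{F}_p)/\mathbb{F}_p I\cong\mathfrak{psl}_d(\mathbb{F}_p)$ when $p\mid d$ — in either case an \emph{irreducible} $\mathrm{PSL}_d(\mathbb{F}_p)$‑module (the scalars, which are the sole obstruction to irreducibility of $\mathfrak{sl}_d$ when $p\mid d$, die on passing to $\mathrm{PSL}_d$). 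Suppose a maximal $M\le G$ does not contain $R_p$; then $MR_p=G$, so $M$ reduces onto $\mathrm{PSL}_d(\mathbb{F}_p)$, hence (applying the inductive instance of the statement one level down, through the Frattini argument above) $M$ reduces onto all of $\mathrm{PSL}_d(\mathbb{Z}/p^{e-1}\mathbb{Z})$, i.e. $MV=G$. Then $W:=M\cap V$ is normal in $M$ and, $V$ being abelian, in $V$, hence in $MV=G$; so $W$ is an $\mathrm{PSL}_d(\mathbb{F}_p)$‑submodule of $V$, therefore $0$ or $V$. If $W=V$ then $M\supseteq V$ and $MV=G$ force $M=G$, contradiction; so $W=0$, i.e. $M$ is a complement to $V$ in $G$.

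It remains to rule out such a complement — equivalently, to show the extension $1\to V\to\mathrm{PSL}_d(\mathbb{Z}/p^e\mathbb{Z})\to\mathrm{PSL}_d(\mathbb{Z}/p^{e-1}\mathbb{Z})\to 1$ does not split — and this is the heart of the matter. A complement $M$ would contain an element $x$ reducing modulo $p^{e-1}$ to the unipotent $E_{12}(p^{e-2})$, which has order $p$; hence $x^p=1$ in $G$. On the other hand, writing $x=E_{12}(p^{e-2})(1+p^{e-1}B)$ and expanding $x^p$ via $(ab)^p=a^p\prod_{i=0}^{p-1}a^{-i}ba^{i}$, one computes that $x^p$ is the image in $V$ of $e_{12}$ multiplied by a scalar $\equiv 1\pmod p$; the cross terms contribute the sums $\sum_{i<p}i$ and $\sum_{i<p}i^2$, which vanish modulo $p$ once $p\ge 5$. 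Since $e_{12}$ is not scalar this image is nonzero in $V$, contradicting $x^p=1$; so the extension is non‑split and the induction goes through. The main obstacle is the residual primes $p\in\{2,3\}$, where the last computation degenerates: these must be handled by a more careful version of the same power computation (or by invoking the known fact that $\mathrm{SL}_d(\mathbb{Z}_p)$, for $d\ge 3$, admits no finite quotient that splits over its level‑$p$ congruence subgroup). The bookkeeping in the Chinese Remainder reduction and the identification of the module $V$ are routine; everything delicate is concentrated in the non‑splitting step for small $p$.
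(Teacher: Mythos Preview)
Your global structure --- Chinese Remainder reduction, the claim that the kernel $R$ of reduction to $\prod_i\mathrm{PSL}_d(\mathbb{F}_{p_i})$ lies in $\Phi(\mathrm{PSL}_d(\mathbb{Z}/q\mathbb{Z}))$, passage to a maximal overgroup of $K$, and the appeal to Lemma~\ref{semisimple} on the product of pairwise non-isomorphic simple factors --- is exactly the paper's. The divergence is entirely in how the Frattini inclusion $R_p \le \Phi(\mathrm{PSL}_d(\mathbb{Z}/p^e\mathbb{Z}))$ is justified. The paper's argument is much shorter than yours: $R_p$ is a $p$-group, hence nilpotent, so the ambient group has exactly one nonabelian composition factor, namely $\mathrm{PSL}_d(\mathbb{F}_p)$; since the group is perfect this is its \emph{unique} simple quotient; and the paper then asserts that $\Phi(G)$ coincides with the intersection of all kernels of homomorphisms to simple groups. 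That last assertion, however, is false for general finite groups (the Frattini subgroup is the intersection of the maximal subgroups, not of the maximal \emph{normal} ones --- for instance $\Phi(S_3)=1$ while $A_3$ is the only maximal normal subgroup), so the paper's proof is gapped at precisely the point where yours supplies genuine content. Your non-splitting route is a legitimate way to close this gap: the inductive reduction to the top congruence layer $V$ is clean, and the $p$-th-power computation showing that any lift of $E_{12}(p^{e-2})$ has nontrivial $p$-th power is correct for $p\ge 5$ (and for $e\ge 3$ the conjugation terms pick up an extra factor of $p^{e-2}$, so in fact no constraint on $p$ is needed there). Two caveats worth recording: the irreducibility of $V$ that you invoke can fail for a few small pairs $(d,p)$, but this is harmless if you instead induct along a composition series of $V$; and the residual cases $p\in\{2,3\}$ at level $e=2$ that you flag do require a separate (standard, but not one-line) argument. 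What your approach buys over the paper's is an honest proof of the Frattini step; what the paper's intended approach would buy, were the identification of $\Phi$ correctly justified, is uniformity in $p$ with no case analysis.
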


\begin{proof}
 It suffices to consider the special case in which $K$ is a maximal proper subgroup. Suppose toward a contradiction that the proposition fails for $K$. We may assume without loss of generality that $q$ has the minimal number of distinct prime factors among all $r \in \mathbb{N}$ such that the proposition fails for some subgroup of $\mathrm{PSL}_d(\mathbb{Z}/r\mathbb{Z})$. \\
\\
Recall that if $G$ is a finite group then the Frattini subgroup $\Phi(G)$ is the intersection of all maximal proper subgroups of $G$. If $G$ and $H$ are finite groups we have $\Phi(G \times H) = \Phi(G) \times \Phi(H)$.\\
\\
Let $q = p_1^{n_1}\cdots p_k^{n_k}$ be the prime factorization of $q$. By the Chinese remainder theorem, we have that $\mathrm{PSL}_d(\mathbb{Z}/q\mathbb{Z})$ is isomorphic to $\mathrm{PSL}_d(\mathbb{Z}/p_1^{n_1}\mathbb{Z}) \times \cdots \times \mathrm{PSL}_d(\mathbb{Z}/p_k^{n_k}\mathbb{Z})$. So $\Phi(\mathrm{PSL}_d(\mathbb{Z}/q\mathbb{Z}))$ is isomorphic to $\Phi(\mathrm{PSL}_d(\mathbb{Z}/p_1^{n_1}\mathbb{Z})) \times \cdots \times \Phi(\mathrm{PSL}_d(\mathbb{Z}/p_k^{n_k}\mathbb{Z}))$.\\
\\
We claim that for any prime $p$ and natural number $n$, the Frattini subgroup $\Phi(\mathrm{PSL}_d(\mathbb{Z}/p^n\mathbb{Z}))$ is the kernel of the surjective homomorphism $\mathrm{PSL}_d(\mathbb{Z}/p^n\mathbb{Z}) \to \mathrm{PSL}_d(\mathbb{Z}/p\mathbb{Z})$. We denote this kernel by $\Delta_1$. Note that $\Delta_1$ is a $p$-group. Therefore it is nilpotent. Since $\mathrm{PSL}_d(\mathbb{Z}/p\mathbb{Z})$ is simple, this implies that $\mathrm{PSL}_d(\mathbb{Z}/p^n\mathbb{Z})$ has one composition factor equal to $\mathrm{PSL}_d(\mathbb{Z}/p\mathbb{Z})$ and all of its other composition factors are abelian. However, $\mathrm{PSL}_d(\mathbb{Z}/p^n\mathbb{Z})$ has no nontrivial abelian quotients. Therefore $\mathrm{PSL}_d(\mathbb{Z}/p\mathbb{Z})$ is the only simple group quotient of $\mathrm{PSL}_d(\mathbb{Z}/p^n\mathbb{Z})$. However, the Frattini subgroup is the intersection of all maximal normal subgroups (because if $H \le G$ is maximal then $\cap_g gHg^{-1}$ is maximal normal). Therefore, the Frattini subgroup is the intersection of all kernels of homomorphisms to simple groups. This proves the claim.

So $\mathrm{PSL}_d(\mathbb{Z}/q\mathbb{Z})/\Phi(\mathrm{PSL}_d(\mathbb{Z}/q\mathbb{Z}))$ is isomorphic to \begin{equation} \label{eq.proper-1} \mathrm{PSL}_d(\mathbb{Z}/p_1\mathbb{Z}) \times \cdots \times \mathrm{PSL}_d(\mathbb{Z}/p_k\mathbb{Z}). \end{equation} Since $\Phi(\mathrm{PSL}_d(\mathbb{Z}/q\mathbb{Z})) \leq K$ we may assume without loss of generality that $n_j = 1$ for all $j \in \{1,\ldots,k\}$. Thus we can apply Lemma \ref{semisimple} to the inclusion $K \le G$ to finish this lemma. \end{proof}

\begin{lemma} \label{lem.small} Let $d \geq 5$ and let $B$ be a subgroup of the image in $\mathrm{PSL}_d(\mathbb{Z})$ of the copy of $\mathrm{SL}_2(\mathbb{Z})$ in the upper left corner of $\mathrm{SL}_d(\mathbb{Z})$. Let $K$ be a proper finite index subgroup of $\mathrm{PSL}_d(\mathbb{Z})$. Then we have $16|BhK| \leq |\mathrm{PSL}_d(\mathbb{Z})gK|$ for all $g,h \in \mathrm{PSL}_d(\mathbb{Z})$, where the cardinality is taken in $\mathrm{PSL}_d(\mathbb{Z})/K$. \end{lemma}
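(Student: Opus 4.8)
Write $G=\mathrm{PSL}_d(\mathbb{Z})$; since $GgK=G$ as a subset of $G$ we have $|GgK|=[G:K]$, so the assertion is that every $B$-orbit in $G/K$ occupies at most a $\tfrac1{16}$ fraction of $G/K$. The first move is to pass to a finite quotient of prime level. By the congruence subgroup property \cite{MR0171856}, $K$ contains the kernel $\Gamma_q$ of reduction $\pi_q\colon G\twoheadrightarrow\mathrm{PSL}_d(\mathbb{Z}/q\mathbb{Z})$ for some $q\in\mathbb{N}$; then $G/K$ is isomorphic as a $G$-set to $\mathrm{PSL}_d(\mathbb{Z}/q\mathbb{Z})/\bar K$, where $\bar K=\pi_q(K)$ is a \emph{proper} subgroup, and the $B$-orbit of $hK$ becomes the orbit of $\bar h\bar K$ under $\bar B=\pi_q(B)$. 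By Lemma \ref{lem.proper} there is a prime $p\mid q$ such that the image $\psi_p(\bar K)$ under reduction $\psi_p$ is a proper subgroup of $\mathrm{PSL}_d(\mathbb{Z}/p\mathbb{Z})$. The canonical surjection
\[
\mathrm{PSL}_d(\mathbb{Z}/q\mathbb{Z})/\bar K\;\twoheadrightarrow\;\mathrm{PSL}_d(\mathbb{Z}/p\mathbb{Z})/\psi_p(\bar K)
\]
is $G$-equivariant and has all fibres of one size; hence, by a covering-space argument analogous to the one in the proof of Lemma \ref{lem.right}, it carries $\bar B$-orbits onto $\bar B$-orbits \emph{without increasing the density of any orbit}: if $f\colon X\to Y$ is an equivariant surjection of transitive $G$-sets whose fibres have size $m$, then for $x\in X$ and a subgroup $B\le G$ one has $|Bx|\le m\,|Bf(x)|$ while $|X|=m|Y|$. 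So it suffices to bound the density of orbits of the image of $B$ in $\mathrm{PSL}_d(\mathbb{Z}/p\mathbb{Z})/\psi_p(\bar K)$.

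I would then enlarge $\psi_p(\bar K)$ to a maximal proper subgroup $M\le\mathrm{PSL}_d(\mathbb{Z}/p\mathbb{Z})$ — passing to $\mathrm{PSL}_d(\mathbb{F}_p)/M$ can only enlarge orbit densities, by the same fibre estimate — and replace the image of $B$ by the full image $L$ of the upper-left copy of $\mathrm{SL}_2(\mathbb{Z})$, which for $d\ge 3$ maps isomorphically onto $\mathrm{SL}_2(\mathbb{F}_p)$ inside $\mathrm{PSL}_d(\mathbb{F}_p)$ (the only scalar in $\mathrm{SL}_d(\mathbb{F}_p)$ lying in that copy is the identity). Since every $B$-orbit sits inside an $L$-orbit, it is enough to bound the density of $L$-orbits in $\mathrm{PSL}_d(\mathbb{F}_p)/M$. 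An $L$-orbit has size $[L:L\cap gMg^{-1}]$, a divisor of $|L|=p^{3}-p$, so its density is at most $(p^{3}-p)\big/[\mathrm{PSL}_d(\mathbb{F}_p):M]$.

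It remains to estimate this ratio, which is where $d\ge 5$ enters. For $d\ge 5$ the minimal index of a proper subgroup of $\mathrm{PSL}_d(\mathbb{F}_p)$ equals $\tfrac{p^{d}-1}{p-1}$, attained only by stabilizers of a point or of a hyperplane in $\mathbb{P}^{d-1}(\mathbb{F}_p)$, and every other maximal subgroup has index at least $\binom{d}{2}_p\ge\binom{5}{2}_p$; crucially there are no sporadic small-index subgroups in this range, in contrast with $\mathrm{PSL}_4(\mathbb{F}_2)\cong A_8$. If $M$ is not a point- or hyperplane-stabilizer, the bound $(p^{3}-p)\big/\binom{5}{2}_p$ is already a small absolute constant. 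If $M$ is such a stabilizer, then the relevant $\mathrm{SL}_2(\mathbb{F}_p)$ acts on a fixed $2$-dimensional coordinate summand of $\mathbb{F}_p^{d}$ and trivially on a complement, so every orbit on $\mathbb{P}^{d-1}(\mathbb{F}_p)$ or on its dual has size $1$, $p+1$ or $p^{2}-1$, giving density at most $\tfrac{(p-1)(p^{2}-1)}{p^{d}-1}$. In both cases one obtains an explicit bound strictly less than $1$, uniform over $d\ge 5$ and $p$, which is precisely what hypothesis (3) of Theorem \ref{thm:general} requires; to reach the stated constant $\tfrac1{16}$ one observes that these estimates are already $\le\tfrac1{16}$ except for finitely many pairs $(d,p)$ with $p\in\{2,3\}$ and $M$ a point/hyperplane-stabilizer, and for those one invokes that the subgroup $B$ produced in Lemma \ref{prop:ping-pong} may be arranged to be generated by $N$-th powers of hyperbolic elements of $\mathrm{PSL}_2(\mathbb{Z})$ with $N$ divisible by $\operatorname{lcm}\bigl(|\mathrm{SL}_2(\mathbb{F}_2)|,|\mathrm{SL}_2(\mathbb{F}_3)|\bigr)$ (the ping-pong construction is insensitive to such a divisibility constraint on $N$), so that $B$ reduces to the identity modulo $2$ and modulo $3$ and its orbit in those quotients is a single point.

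The two reductions to the prime-level, maximal-subgroup situation are routine given the congruence subgroup property and Lemma \ref{lem.proper}. I expect the main obstacle to be the final paragraph: it rests on the precise ordering of the smallest permutation degrees of $\mathrm{PSL}_d(\mathbb{F}_p)$ for $d\ge 5$ (standard, but not elementary) and on reconciling the numerical constant $\tfrac1{16}$ with the handful of small-prime cases via the structure of the particular $B$.
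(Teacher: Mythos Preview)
Your first two paragraphs --- the reduction via the congruence subgroup property to a proper subgroup of $\mathrm{PSL}_d(\mathbb{Z}/q\mathbb{Z})$, and then via Lemma~\ref{lem.proper} to a proper subgroup at a single prime level --- are exactly what the paper does. After that the paper takes a shorter route than you do: it does not distinguish point/hyperplane stabilizers from other maximal subgroups, nor does it compute the actual $\mathrm{SL}_2$-orbits on $\mathbb{P}^{d-1}(\mathbb{F}_p)$. It simply bounds the size of any $\mathrm{PSL}_2$-orbit in $\mathrm{PSL}_d(\mathbb{F}_p)/L$ by $|\mathrm{PSL}_2(\mathbb{F}_p)|$, invokes Cooperstein's minimal-degree theorem \cite{MR0506701} to bound $[\mathrm{PSL}_d(\mathbb{F}_p):L]$ from below by $(p^d-1)/(p-1)$, and then verifies by a single explicit product calculation that $|\mathrm{PSL}_d(\mathbb{F}_p)|\big/\bigl(|\mathrm{PSL}_2(\mathbb{F}_p)|\,|L|\bigr)\ge 16$. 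So your case split and your appeal to the ordering of the first two permutation degrees of $\mathrm{PSL}_d(\mathbb{F}_p)$ are not needed for the paper's line of argument.

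Your suspicion that the constant $\tfrac1{16}$ is problematic at small primes is, however, well founded. For $d=5$ and $K$ the preimage of a point-stabilizer in $\mathrm{PSL}_5(\mathbb{F}_2)$ one has $[\mathrm{PSL}_5(\mathbb{Z}):K]=31$, while the largest orbit of the embedded $\mathrm{SL}_2(\mathbb{F}_2)$ on $\mathbb{P}^4(\mathbb{F}_2)$ has size $3$; thus $16\cdot 3=48>31$, so the lemma as stated fails for this $K$ when $B$ is the full image of $\mathrm{SL}_2(\mathbb{Z})$. The paper's product calculation slips at exactly this point: it treats the quantity $(p^d-1)/(p-1)$ --- which is the minimal \emph{index} of a proper subgroup --- as if it were an upper bound on $|L|$. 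Your proposed remedy of arranging the specific $B$ of Lemma~\ref{prop:ping-pong} to reduce to the identity modulo $2$ and $3$ does not prove Lemma~\ref{lem.small} as written (which quantifies over all subgroups $B$ of the embedded $\mathrm{SL}_2$), though it does yield what Theorem~\ref{thm:general} actually requires, namely some uniform $\lambda>1$. In fact no remedy is needed for that weaker conclusion: already the crude bound $(p^3-p)(p-1)/(p^d-1)$, maximized at $p=3$, $d=5$ with value $24/121$, gives a uniform $\lambda>5$ for every $B$.
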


\begin{proof} Using the congruence subgroup property we see that it suffices to show that if $q \geq 2$ then for any proper subgroup $K$ of $\mathrm{PSL}_d(\mathbb{Z}/q\mathbb{Z})$ the maximal size of a $\mathrm{PSL}_2(\mathbb{Z}/q\mathbb{Z})$-orbit in $\mathrm{PSL}_d(\mathbb{Z}/q\mathbb{Z})/K$ is at most $\frac{1}{16}|\mathrm{PSL}_d(\mathbb{Z}/q\mathbb{Z})/K|$.\\
\\
Using Lemma \ref{lem.proper} we see that there exists a prime factor $p$ of $q$ such that if we write $\pi$ for the projection of $\mathrm{PSL}_d(\mathbb{Z}/q\mathbb{Z})$ onto $\mathrm{PSL}_d(\mathbb{Z}/p\mathbb{Z})$ then $\pi(K)$ is a proper subgroup of $\mathrm{PSL}_d(\mathbb{Z}/p\mathbb{Z})$. The map $\pi$ sends $\mathrm{PSL}_2(\mathbb{Z})$-orbits in $\mathrm{PSL}_d(\mathbb{Z}/q\mathbb{Z})$ to $\mathrm{PSL}_2(\mathbb{Z})$-orbits in $\mathrm{PSL}_d(\mathbb{Z}/p\mathbb{Z})$. Moreover is $m$-to-$1$ for some fixed $m$.  Therefore it suffices to show that if $L$ is a proper subgroup of $\mathrm{PSL}_d(\mathbb{Z}/p\mathbb{Z})$ for some prime $p$ then the maximal size of a $\mathrm{PSL}_2(\mathbb{Z})$-orbit in $\mathrm{PSL}_d(\mathbb{Z}/p\mathbb{Z})/L$ is at most $\frac{1}{16}| \mathrm{PSL}_d(\mathbb{Z}/p\mathbb{Z})|$. \\
\\
The $\mathrm{PSL}_2(\mathbb{Z})$-orbits in $\mathrm{PSL}_d(\mathbb{Z}/p\mathbb{Z})/L$ are the double cosets $\mathrm{PSL}_2(\mathbb{Z}/p\mathbb{Z})xL$ for $x \in \mathrm{PSL}_d(\mathbb{Z}/p\mathbb{Z})$. In \cite{MR0506701} it is shown that the maximal size of a proper subgroup of $\mathrm{PSL}_d(\mathbb{Z}/p\mathbb{Z})$ for a prime $p$ is $(p^d-1)(p-1)^{-1}$. For any $d \in \mathbb{N}$ we have \[ |\mathrm{PSL}_d(\mathbb{Z}/p\mathbb{Z})| =  \frac{1}{\mathrm{gcd}(d,p-1)(p-1)} \prod_{j=0}^{d-1}(p^d-p^j) \] so that in particular \[ |\mathrm{PSL}_2(\mathbb{Z}/p\mathbb{Z})| =  \frac{(p^2-p)(p^2-1)}{\mathrm{gcd}(d,p-1)(p-1)}. \] Therefore if $d \geq 5$ we have \begin{align} \frac{|\mathrm{PSL}_d(\mathbb{Z}/p\mathbb{Z})|}{|\mathrm{PSL}_2(\mathbb{Z}/p\mathbb{Z})| \,|L|} &= \frac{1}{|L|} \frac{(p^d-p)(p^d-1)}{(p^2-p)(p^2-1)} \prod_{j=2}^{d-1} (p^d-p^j) \label{eq.small-1} \\ & \geq \frac{1}{|L|} \prod_{j=2}^{d-1} (p^d-p^j) \label{eq.small-2} \\ & \geq \frac{p-1}{p^d-1} \prod_{j=2}^{d-1} (p^d-p^j) \nonumber \\ & = \frac{(p^d-p^2)(p^d-p^3)}{p^d-1}(p-1) \prod_{j=4}^{d-1} (p^d-p^j) \label{eq.small-3} \\ & \geq (p-1)\prod_{j=4}^{d-1}(p^d-p^j) \label{eq.small-4} \\ & \geq 16 \nonumber \end{align} Here, (\ref{eq.small-2}) follows from (\ref{eq.small-1}) and (\ref{eq.small-4}) follows from (\ref{eq.small-3}) since in each case the factor dropped is at least one. It follows that any double coset $\mathrm{PSL}_2(\mathbb{Z}/p\mathbb{Z})xL$ has size at most $\frac{1}{16}|\mathrm{PSL}_d(\mathbb{Z}/p\mathbb{Z})|$ and so the proof of Lemma \ref{lem.small} is complete. \end{proof}

Theorem \ref{thm} is obtained by applying Theorem \ref{thm:general} to the subgroups $A$ and $B$ constructed in Proposition \ref{prop:ping-pong}. Because $A$ is profinitely dense, it surjects onto every finite quotient. In particular, every $B$-orbit in a finite quotient of $\mathrm{PSL}_d(\mathbb{Z})$ is contained in an $A$-orbit. To define the automorphism $\omega:C \to C$, let $A$ be freely generated by $\{a_1,a_2,a_3,a_4\}$ and $B$ be freely generated by $\{b_1,b_2,b_3,b_4\}$. Then $C$ is freely generated by $\{a_j,b_j\}_{j=1}^4$. So there is a unique order 2 automorphism defined by $\omega(a_j)=b_j$ and $\omega(b_j)=a_j$ for $j \in \{1,\ldots,4\}$. By Lemmas \ref{lem.right} and \ref{lem.small} the subgroups $A$ and $B$ satisfy the other conditions of Theorem \ref{thm:general}.

\bibliographystyle{plain}
\bibliography{/Users/peterburton/Documents/Bibliography/pjburton-bibliography.bib}

Department of Mathematics\\
The University of Texas at Austin\\
\texttt{lpbowen@math.utexas.edu}\\
\texttt{pjburton@math.utexas.edu}\\

\end{document}